\documentclass[12pt]{amsart}
\usepackage{amsmath,amssymb,amsthm}
\usepackage[margin=1.1in]{geometry}
\usepackage[colorlinks=true,linkcolor=blue,citecolor=blue]{hyperref}
\usepackage[capitalize]{cleveref}

\newcommand{\R}{\mathbb{R}}
\newcommand{\C}{\mathbb{C}}
\newcommand{\E}{\mathbb{E}}
\newcommand{\Lom}{\mathfrak{L}}
\newcommand{\sjz}{j_{\nu,1}}

\newtheorem{theorem}{Theorem}[section]
\newtheorem{proposition}[theorem]{Proposition}
\newtheorem{lemma}[theorem]{Lemma}
\newtheorem{corollary}[theorem]{Corollary}

\theoremstyle{remark}
\newtheorem{remark}[theorem]{Remark}

\title{Moment-optimal Gamma approximation of Bessel Process exit times}
\author{Steven P. Clark}
\address{University of North Carolina at Charlotte}
\email{spclark@charlotte.edu}

\subjclass[2020]{Primary 60J60, 60E10; Secondary 33C10, 33C45, 41A21, 41A55, 60E07}
\keywords{Bessel process, exit time, Gamma convolutions, Thorin measure, Lommel polynomials, Bessel functions,
Gaussian quadrature, Jacobi matrix, cumulant matching, Pad\'e approximation}

\begin{document}

\begin{abstract}
We construct finite Gamma convolutions approximating the first hitting time of $1$ by a Bessel process of index $\nu>-1$ started at the origin.
The scale measure in the classical exponential representation of this law is obtained by squaring the positive half of a dilated modified Lommel polynomial orthogonality measure.
Its Jacobi recurrence yields an $N$-factor Gamma convolution that uniquely matches the first $2N$ cumulants, the maximal number attainable in this class.
From an exact Gaussian quadrature remainder we derive supergeometric convergence of the Laplace transforms
on compact subsets of a common slit plane and geometric convergence of the distribution functions uniformly on the real line.
The approximants have total Gamma shape $N(N+\nu+1)$, and their Laplace transforms and negative moments decrease to those of the exit law.
\end{abstract}

\maketitle

\section{Introduction}\label{sec:intro}

Fix $\nu>-1$, and let $T$ be the first time a Bessel process of index $\nu$, started at the origin, reaches $1$.
For integer $d=2\nu+2$, this is the exit time of standard Brownian motion from the unit ball in $\R^d$.
Ciesielski and Taylor \cite{CiesielskiTaylor} for integer dimension, and more generally Kent \cite{Kent} for every index,
established an exponential representation of this law.
Writing $j_{\nu,1}<j_{\nu,2}<\cdots$ for the positive zeros of the Bessel function $J_\nu$ and setting
\begin{equation}\label{eq:ck}
c_k=\frac{2}{j_{\nu,k}^2},\qquad k\ge1,
\end{equation}
one has
\begin{equation}\label{eq:ct}
T\overset d=\sum_{k\ge1}c_ke_k,
\end{equation}
where the $e_k$ are independent exponential random variables of mean one.\footnote{
Here the Bessel process is $R_t=\sqrt{Y_t}$, where $Y$ is the nonnegative strong solution of
\begin{equation}\label{eq:besq}
dY_t=(2\nu+2)\,dt+2\sqrt{Y_t}\,dB_t,\qquad Y_0=0,
\end{equation}
and $B$ is a standard Brownian motion. This specifies the process for every $\nu>-1$; see \cite[Chapter XI]{RevuzYor}.
For $-1<\nu<0$ the origin is a regular boundary at which this solution is instantaneously reflecting. With absorption
at the origin instead, the zeros of $J_{-\nu}$ would replace those of $J_\nu$ in \cref{eq:ck} \cite[Table~2 and Eq. (8.2)]{Kent1980}.}
The representation is the Bessel case of Kent's eigenvalue expansion for diffusion hitting times
\cite[Corollary~5.1 and Section~8]{Kent1980}, in which the rates $1/c_k=j_{\nu,k}^2/2$ are the eigenvalues of the
generator on $(0,1)$ killed at $1$.
The corresponding Laplace transform is
\begin{equation}\label{eq:expectation_ct}
\phi(\lambda):=\E[e^{-\lambda T}] = \prod_{k\ge1}(1+\lambda c_k)^{-1} = \frac{1}{{}_0F_1(;\nu+1;\lambda/2)},
\end{equation}
where ${}_0F_1$ is the confluent hypergeometric limit function.
The identity holds for real $\lambda>-1/c_1$ and follows from the canonical product of $J_\nu$ \cite[\S15.41]{Watson}.

In this paper we seek finite Gamma convolutions that approximate the law of $T$.
Allowing both the shape and the scale of each factor to vary gives a class in which cumulant matching has an exact solution.
\Cref{thm:intro_match} characterizes that solution and establishes the maximal number of cumulants of $T$ it can match,
and \cref{thm:intro_convergence} measures the approximation error in the transforms and in the distribution functions.

The construction uses the finite positive scale measure
\[\rho_\nu :=\sum_{k\ge1}c_k\delta_{c_k},\quad\text{with}\quad \rho_\nu(\R)=\E[T]=\frac{1}{2\nu+2}.\]
For $N\ge1$, let $(\hat c_i,\hat w_i)_{i=1}^N$ be its $N$-point Gauss quadrature rule, with positive weights and nodes ordered as $0<\hat c_1<\cdots<\hat c_N<c_1$. Thus
\[\sum_{i=1}^N\hat w_i f(\hat c_i) = \int f(y)\,d\rho_\nu(y),\]
for every polynomial $f$ with $\deg f\le 2N-1 $.
Since this is the defining property of the Gauss rule, we shall henceforth refer to it simply as \emph{Gaussian exactness}.
We write $\Gamma(b,c)$ for the Gamma law with shape $b>0$ and scale $c>0$, whose Laplace transform is $(1+c\lambda)^{-b}$, and define
\[\widehat T_N = \sum_{i=1}^N G_i, \qquad G_i\sim\Gamma\!\left(\frac{\hat w_i}{\hat c_i},\hat c_i\right), \quad\text{independently}.\]
The Jacobi recurrence is derived in \cref{sec:scale}, and \cref{prop:quadrature} computes the nodes and weights from it.

We now state our main results.

\begin{theorem}
\label{thm:intro_match}
For every $\nu>-1$ and $N\ge1$,
\[\kappa_m(\widehat T_N)=\kappa_m(T),\qquad 1\le m\le2N,\]
where $\kappa_m$ denotes the $m$th cumulant. Among laws of sums of at most $N$ independent Gamma random variables with positive shapes and
scales, the law of $\widehat T_N$ is the unique law with this property.
The maximal number of consecutive cumulants, starting with the first, that a law in this class can match is $2N$.
\end{theorem}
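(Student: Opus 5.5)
The whole argument reduces to moments of positive measures via the cumulants of Gamma sums. For $X\sim\Gamma(b,c)$ we have $\log\E[e^{-\lambda X}]=-b\log(1+c\lambda)$. Expanding in powers of $\lambda$ gives
\[
\kappa_m(X)=(m-1)!\,b\,c^m .
\]
Applying this to \cref{eq:ct} and using independence, $\kappa_m(T)=(m-1)!\sum_k c_k^m$. We rewrite both sides in terms of a scale measure. For $T$,
\[
\kappa_m(T)=(m-1)!\int y^{m-1}\,d\rho_\nu(y).
\]
For a sum of independent Gamma variables with shapes $b_i$ and scales $s_i$, set $\mu=\sum_i b_is_i\,\delta_{s_i}$. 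Then
\[
\kappa_m=(m-1)!\int y^{m-1}\,d\mu(y).
\]
For $\widehat T_N$ we have $b_i=\hat w_i/\hat c_i$ and $s_i=\hat c_i$, so $\mu$ is exactly $\sum_i\hat w_i\delta_{\hat c_i}$. Gaussian exactness with $f(y)=y^{m-1}$, $1\le m\le 2N$, then gives $\kappa_m(\widehat T_N)=\kappa_m(T)$ at once. One small point: the shapes $\hat w_i/\hat c_i$ are positive because the weights are positive and the nodes lie in $(0,c_1)$.

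For uniqueness, let $S$ be any sum of at most $N$ independent Gamma variables that matches the first $2N$ cumulants. Its measure $\mu_S$ is a positive measure on $(0,\infty)$ with at most $N$ atoms. After merging equal scales, which changes neither the law nor $\mu_S$, the atoms are distinct. Matching cumulants means $\mu_S$ has the same moments of orders $0,\dots,2N-1$ as $\rho_\nu$. Let $\pi_N$ be the degree-$N$ orthogonal polynomial for $\rho_\nu$; it exists because $\rho_\nu$ has infinitely many support points. Let $q$ be the monic polynomial of degree at most $N$ vanishing on the atoms of $\mu_S$.
\begin{itemize}
\item Since $\deg(q\,p)\le 2N-1$ for every $p$ with $\deg p\le N-1$, we get $\int q\,p\,d\rho_\nu=\int q\,p\,d\mu_S=0$.
\item If $\deg q<N$, take $p=q$: then $\int q^2\,d\rho_\nu=0$. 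This is impossible because $\rho_\nu$ has infinite support.
\item Hence $\deg q=N$ and $q$ is orthogonal to all lower-degree polynomials, so $q=\pi_N$. The atoms of $\mu_S$ are therefore the Gauss nodes $\hat c_i$.
\item The weights are then determined by the moments of orders $0,\dots,N-1$, since the Vandermonde matrix at distinct nodes is invertible.
\end{itemize}
So $\mu_S=\sum_i\hat w_i\delta_{\hat c_i}$. The map $\mu\mapsto$ law is injective: the Laplace transform equals $\exp(-\int y^{-1}\log(1+y\lambda)\,d\mu(y))$, and it determines $\mu$ because its poles and their orders recover the scales and shapes. This step is not even needed, since the parameters are recovered directly from $\mu_S$. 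Therefore the law of $S$ is the law of $\widehat T_N$.

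For maximality, suppose some $S$ in the class matched the first $2N+1$ cumulants. By the uniqueness just shown, $S$ would have the law of $\widehat T_N$, and then also
\[
\int y^{2N}\,d\mu=\int y^{2N}\,d\rho_\nu .
\]
Take $f=\pi_N^2$, which has degree $2N$. The quadrature sum is zero because $\pi_N$ vanishes at the nodes. On the other hand $\int\pi_N^2\,d\rho_\nu>0$, and every lower-degree part of $f$ is integrated exactly. Hence the degree-$2N$ moments differ, a contradiction. So $2N$ is the maximum. I expect no serious obstacle. The points that need care are the positivity and the infinite support of $\rho_\nu$, which hold since the $c_k$ are distinct and positive, and the bookkeeping in the uniqueness step when fewer than $N$ factors are used or when scales coincide.
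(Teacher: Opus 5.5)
Your proof is correct and takes essentially the same route as the paper. You reduce cumulant matching to matching the moments of orders $0,\dots,2N-1$ of $\rho_\nu$ via $\mu=\sum_i b_is_i\delta_{s_i}$, use the node polynomial of a competing measure for uniqueness, and use a squared degree-$N$ polynomial for maximality. The only cosmetic difference is in the maximality step: you first invoke uniqueness and then test against $\pi_N^2$, whereas the paper applies the node polynomial $q$ of an arbitrary candidate directly to get $\int q^2\,d\rho_\nu=\int q^2\,d\mu=0$.
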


Equivalently, the first $2N$ moments agree, which is the sense in which the approximation is moment-optimal.
For example, the one-point rule produces
\[\widehat T_1\sim\Gamma\!\left(\nu+2,\frac{1}{2(\nu+1)(\nu+2)}\right),\]
the Gamma law matching the mean and variance of $T$.

To state the convergence result, write $F$ and $F_N$ for the distribution functions of $T$ and $\widehat T_N$, respectively. Define
\[\phi_N(z)=\E[e^{-z\widehat T_N}]\quad\text{and}\quad \mathcal D_\nu=\C\setminus(-\infty,-1/c_1].\]
The transforms are initially defined on the right half-plane.
Let $s_*>0$ be the unique solution of $s_*+4\log s_*=6\log2-4$, and set $q_*=e^{-s_*}\approx 0.43049$.

\begin{theorem}
\label{thm:intro_convergence}
Fix $\nu>-1$.
\begin{enumerate}
\item[(i)] The transforms $\phi$ and $\phi_N$ admit analytic continuations to $\mathcal D_\nu$.
For every compact $K\subset\mathcal D_\nu$, there are finite constants $C_{\nu,K},A_{\nu,K}>0$ such that
\[\sup_{z\in K}|\phi_N(z)-\phi(z)| \le C_{\nu,K}\exp\!\left(-4N\log N+A_{\nu,K}N\right),\qquad N\ge1.\]
\item[(ii)] For every $q\in(q_*,1)$, there is a finite constant $C_{\nu,q}>0$ such that
\[\sup_{x\in\R}|F_N(x)-F(x)|\le C_{\nu,q}q^N, \qquad N\ge1.\]
\end{enumerate}
\end{theorem}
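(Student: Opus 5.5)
We prove \cref{thm:intro_convergence} in two stages. Part (i) comes from an exact remainder formula for the log-transforms. Part (ii) follows from (i) by a smoothing (Esseen-type) inequality, or by inverting the transform along a contour.

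\emph{Part (i).} Write $\log\phi(z)=-\sum_k\log(1+zc_k)=-\int g_z(y)\,d\rho_\nu(y)$ with
\[g_z(y)=\frac{\log(1+zy)}{y}.\]
Similarly $\log\phi_N(z)=-\sum_i \hat w_i\,g_z(\hat c_i)$. For $z\in\mathcal D_\nu$ and $y\in[0,c_1]$ we have $1+zy\notin(-\infty,0]$. Hence $g_z$ is analytic in $y$ on a neighbourhood of $[0,c_1]$, and both continuations exist. Indeed $\phi=1/{}_0F_1$ is meromorphic with poles exactly at $-1/c_k$, and $\phi_N$ is a finite product.

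The difference is the Gauss quadrature error $R_N(g_z)=\int g_z\,d\rho_\nu-\sum_i\hat w_i g_z(\hat c_i)$. We represent it as
\[R_N(g_z)=\frac{1}{2\pi i}\oint_\Gamma \frac{r_N(\zeta)}{p_N(\zeta)}\,g_z(\zeta)\,d\zeta,\]
where $p_N$ is the $N$th monic orthogonal polynomial of $\rho_\nu$, $r_N(\zeta)=\int p_N(y)\,d\rho_\nu(y)/(\zeta-y)$ is the function of the second kind, and $\Gamma$ encircles $[0,c_1]$ inside the analyticity domain of $g_z$. Equivalently, we can use the Stieltjes-transform/Padé form, using that $g_z(y)=\int_{-\infty}^{-1/z}\dots$ is itself a Stieltjes-type function of $y$.

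To get a supergeometric rate we need explicit asymptotics for $r_N/p_N$. By \cref{sec:scale}, $p_N$ is essentially a dilated modified Lommel polynomial. The classical Hurwitz limit, in which Lommel polynomials scaled by $\Gamma(N+\nu)\,(\zeta/2)^{N}$-type factors converge to Bessel functions, gives
\[\left|\frac{r_N(\zeta)}{p_N(\zeta)}\right|\lesssim \frac{C^N}{(N!)^{4}}\]
uniformly on $\Gamma$. The exponent $4$ arises because the recurrence coefficients decay like $N^{-2}$ in the scale variable, so that $p_N(\zeta)^2\sim N^{4N}$ up to $e^{O(N)}$. Combining this with $|\phi_N-\phi|\le|\phi|\,|e^{R_N}-1|$ and compactness of $K$ gives the bound $C\exp(-4N\log N+AN)$. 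The main obstacle is making the Lommel/Hurwitz asymptotics uniform in $\zeta$ on a fixed contour and tracking constants in $N$. We would first try to use the explicit Jacobi parameters together with a ratio comparison $p_{N+1}/p_N$.

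\emph{Part (ii).} Here $z$ is imaginary, $z=it$, and (i) is only local. We therefore use Esseen's smoothing inequality:
\[\sup_x|F_N-F|\le\frac1\pi\int_{-\tau}^{\tau}\frac{|\phi_N(it)-\phi(it)|}{|t|}\,dt+\frac{C\|F'\|_\infty}{\tau}.\]
The density of $T$ is bounded because $T$ contains at least two exponential summands. The error $R_N(g_{it})$ on $|t|\le\tau$ is controlled by a contour $\Gamma_t$ that must avoid the branch point $y=-1/(it)$, which lies at distance $1/|t|$ from $[0,c_1]$. The supergeometric factor then degrades to roughly $(\sigma|t|)^{-2N}$-type ellipse-radius bounds.

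We would choose $\tau\approx e^{sN}$ and optimize. The Bernstein-ellipse radius for a singularity at distance $1/\tau$ balances against the $N^{-4N}$ gain, producing the transcendental equation $s+4\log s=6\log2-4$ and the rate $q_*=e^{-s_*}$. Any $q>q_*$ absorbs the polynomial factors. The delicate step is the uniform-in-$t$ remainder bound with explicit ellipse geometry, which must be sharp enough to yield exactly $s_*$.
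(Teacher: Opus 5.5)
\textbf{Part (i).} This is fine in outline. Your second-kind kernel satisfies $r_N(\zeta)/p_N(\zeta)=p_N(\zeta)^{-2}\int p_N(y)^2(\zeta-y)^{-1}\,d\rho_\nu(y)$, which is the paper's Stieltjes remainder identity in another guise. The Hurwitz asymptotics are unnecessary, however, and your stated mechanism is wrong for monic $p_N$. There one only has the geometric bound $|p_N(\zeta)|\ge\operatorname{dist}(\zeta,[0,c_1])^N$. The factor $e^{-4N\log N}$ comes from the numerator $h_N=\int p_N^2\,d\rho_\nu=S_1\prod_{j\le N}\gamma_j$, which the Jacobi data give in closed form. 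It is the orthonormal $p_N/\sqrt{h_N}$, not $p_N$, that grows like $N^{2N}$.

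\textbf{Part (ii).} This has a genuine gap. In Esseen's inequality the smoothing term is of order $1/\tau$, so a geometric rate forces $\tau$ to be exponentially large in $N$. But your only control of $\phi_N(it)-\phi(it)$ is the quadrature remainder, which on the imaginary axis is at most $h_N|t|^{2N+1}/(2N+1)$. That bound follows directly from the exact remainder identity because $|1+iuy|\ge1$; no ellipse geometry is needed. A contour forced to pass within $1/|t|$ of the origin would at best reproduce it with a worse constant in the base, which would move the balance point away from $q_*$. The bound is small only for $|t|=O(N^2)$; at $|t|=e^{sN}$ it is of order $e^{2sN^2}$. Conversely, with $\tau$ of order $N^2$ the smoothing term is only $O(N^{-2})$. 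No choice of $\tau$ closes your argument, and the equation $s+4\log s=6\log2-4$ cannot emerge from it.

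Two ingredients are missing.
\begin{enumerate}
\item Full Fourier inversion, with the high-frequency tails $\int_H^\infty|\phi(iu)|\,du/u$ and $\int_H^\infty|\phi_N(iu)|\,du/u$ bounded separately. Since $|\E e^{-iuY}|$ is nonincreasing in $u$ for any Gamma convolution $Y$, a summand of shape at least one and scale at least $1/H$ gives $\int_H^\infty|\phi_N(iu)|\,du/u\le\sqrt2|\phi_N(iH)|\le\sqrt2\bigl(|\phi(iH)|+\varepsilon(H)\bigr)$. For $\widehat T_N$ this summand is the largest-node factor, and using it requires the uniform bounds $\hat w_N/\hat c_N>1$ and $\hat c_N\ge a_1^2$ of \cref{prop:nodes}.
\item The decay $|\phi(iu)|=O(u^{\nu/2+1/4}e^{-\sqrt u})$, which comes from the large-argument asymptotics of $I_\nu(\sqrt{2iu})$.
\end{enumerate}
With $H=\eta N^2$, Stirling applied to the closed form of $h_N$ gives remainder terms of size $(e^4\eta^2/64)^N$ up to polynomial factors, and the endpoint term is of size $e^{-\sqrt\eta N}$. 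Equating the two bases with $s=\sqrt\eta$ gives exactly the equation defining $s_*$.
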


The construction rests on a correspondence between cumulants and moments.
Formally, if a Gamma convolution $X$ has shapes $b_i$ and scales $s_i$, then
\[\frac{\kappa_m(X)}{(m-1)!} = \sum_i b_i s_i^m = \int y^{m-1}\,d\mu_X(y),\quad\text{where}\quad \mu_X=\sum_i b_i s_i\delta_{s_i}.\]
For the exit law, the corresponding measure is $\rho_\nu$.
Matching the first $2N$ cumulants is therefore a positive $N$-point quadrature problem of degree $2N-1$, which is the problem solved by Gaussian quadrature.
The orthogonality of the node polynomial also accounts for uniqueness and maximality. These arguments will be used to establish \cref{thm:intro_match} in \cref{sec:approx}.

Finite Gamma approximations obtained by applying Pad\'e approximation to negative logarithmic derivatives of Laplace transforms were developed in
\cite{FurmanHackmannKuznetsov,FurmanKuznetsovMiles}. The construction here realizes that scheme for the Bessel exit law through a Lommel polynomial recurrence.
A special feature of this law is that the orthogonal polynomials of $\rho_\nu$ can be identified explicitly.
For $\beta>0$, let $\Lom_{\nu,\beta}$ be the symmetric measure placing mass $(\beta/j_{\nu,k})^2$ at each of the points $\pm\beta/j_{\nu,k}$.
Its monic orthogonal polynomials are dilated modified Lommel polynomials \cite{Dickinson}, with recurrence coefficients
\begin{equation}\label{eq:A2}
A_m^2=\frac{\beta^2}{4(\nu+m)(\nu+m+1)},\qquad m\ge1.
\end{equation}
At $\beta=\sqrt2$, the image of the positive half of this measure under $x\mapsto x^2$ is $\rho_\nu$.
The even reduction of the Lommel recurrence therefore supplies Jacobi coefficients that are rational in $\nu$ for the scale measure.
The associated finite Jacobi matrices determine the nodes and weights \cite{GolubWelsch}, so the Gamma parameters can
be computed directly from these coefficients, without evaluating Bessel zeros.

The same quadrature also controls the approximation error.
Let $Q$ and $Q_N$ be the analytic logarithms determined by $\phi=e^{-Q}$, $\phi_N=e^{-Q_N}$, and $Q(0)=Q_N(0)=0$.
On $\mathcal D_\nu$, their derivatives satisfy
\[Q'(z)=\int\frac{d\rho_\nu(y)}{1+zy}\quad\text{and}\quad Q_N'(z)=\sum_{i=1}^N\frac{\hat w_i}{1+z\hat c_i}.\]
By the Gaussian quadrature--Pad\'e correspondence \cite[Theorem~6]{HackmannKuznetsov},
$Q_N'$ is the $[N-1/N]$ Pad\'e approximant of $Q'$ at zero, and integration from zero recovers the Gamma product.
Since \cref{eq:expectation_ct} furnishes an exponential moment of $T$, expansion at zero is admissible in this scheme \cite[Section~3]{FurmanKuznetsovMiles}.
If $p_N$ is the monic orthogonal polynomial of degree $N$ for $\rho_\nu$, the Lommel recurrence determines the squared norm
\[h_N:=\int p_N(y)^2\,d\rho_\nu(y) = \frac{\Gamma(\nu+1)^2}{2^{2N+1}\Gamma(\nu+2N+1)\Gamma(\nu+2N+2)}.\]
A remainder identity expresses $Q'-Q_N'$ in terms of $p_N^2$ and yields bounds governed by $h_N$ (\cref{lem:gauss_remainder}).
By Stirling's formula, $\log h_N=-4N\log N+O_\nu(N)$, from which the compact transform estimate follows.
For the distributional estimate, \cref{lem:transfer} bounds the Kolmogorov distance between two Gamma convolutions by the transform
difference on an imaginary-axis interval and by the transforms at its endpoint. The remainder supplies the first on an interval of
length of order $N^2$, the decay of the transform of $T$ along the imaginary axis supplies the second, and balancing the two leads to
the uniform geometric bound in \cref{thm:intro_convergence}(ii).

Two further properties describe the approximants. Their total Gamma shape is
\[\sum_{i=1}^N\frac{\hat w_i}{\hat c_i}=N(N+\nu+1),\]
and $\phi_N(\lambda)$ decreases to $\phi(\lambda)$ for every $\lambda>0$.
Consequently, for each $q>0$, the negative moment $\E[\widehat T_N^{-q}]$ is finite precisely when $N(N+\nu+1)>q$,
and these moments decrease in $N$ to $\E[T^{-q}]<\infty$ (\cref{prop:shape,prop:negmom}).
The spectral construction also connects the approximation to tridiagonal eigenvalue methods for Bessel and Coulomb zeros \cite{GradZakrajsek,Ikebe}.
Here the full quadrature spectrum, together with its spectral weights, determines the moment-optimal approximation of the exit law (\cref{rem:ikebe}).

The normalization to radius $1$ is a convention rather than a restriction, since the exit time $T_a$ at radius $a>0$
satisfies $T_a\overset d=a^2T$ and has approximants $a^2\widehat T_N$ (\cref{prop:bridge}).
Throughout, we write $S_m=\sum_{k\ge1}c_k^m=2^m\sigma_m(\nu)$, where $\sigma_m(\nu)=\sum_{k\ge1}j_{\nu,k}^{-2m}$ are the Rayleigh
sums, for $m\ge1$. We use the standard $i=\sqrt{-1}$ for the imaginary unit, $\Re z$ denotes the real part of $z\in\C$, and complex powers and logarithms are taken on their principal branches.

The remainder of the paper unfolds as follows.
We begin in \cref{sec:scale} by identifying the scale measure and deriving the Jacobi recurrence that determines the approximants.
The construction and elementary node bounds established there provide the preparation for \cref{sec:approx}, where we prove
\cref{thm:intro_match} and then the two convergence estimates in \cref{thm:intro_convergence}.
With the main theorems established, we return in \cref{sec:properties} to additional features of the
construction, including its size-bias interpretation, total Gamma shape, endpoint behaviour, and monotone convergence of negative moments.
The numerical comparisons in \cref{sec:numerics} illustrate the resulting approximation alongside the mean-corrected truncation of \cref{eq:ct}.
Appendix~\ref{app:nodes} returns to the elementary bounds from \cref{sec:scale} and strengthens them to quantitative estimates
for the recovery of the largest atom and its mass. These estimates also yield relative accuracy of the upper-tail probabilities
on intervals whose lengths grow exponentially with $N$.

\section*{Disclosure of AI Tool Use} Claude Fable 5.1 (Anthropic) was used to write
Python code using \texttt{mpmath} for the high-precision calculations in \cref{sec:numerics}.
Claude was also used for editing the manuscript and polishing the exposition.
ChatGPT 6.1 Sol (OpenAI) assisted with proofreading and suggested several improvements that strengthened the paper's contributions.
ChatGPT also pointed out the connection of the paper's Gamma-Lommel polynomial approximation to the Gamma-Pad\'e approximation literature.
The author takes full responsibility for the mathematical content of this paper.

\section{The scale measure and its orthogonal polynomials}\label{sec:scale}

In this section we identify the scale measure with an even reduction of the modified-Lommel measure and derive the
Jacobi recurrence that determines its Gaussian quadrature rules.

\begin{lemma}\label{lem:allbeta}
For every $\nu>-1$ and every $\beta>0$ the measure $\Lom_{\nu,\beta}$ is finite and symmetric with
infinitely many atoms, and its monic orthogonal polynomials satisfy $P_0=1$, $P_1(x)=x$, and
\[P_{m+1}(x)=x\,P_m(x)-A_m^2\,P_{m-1}(x),\qquad m\ge1,\]
with $A_m^2$ as in \cref{eq:A2}.
\end{lemma}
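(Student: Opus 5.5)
Finiteness and symmetry are immediate from the definition. The measure $\Lom_{\nu,\beta}$ places mass $(\beta/j_{\nu,k})^2$ at $\pm\beta/j_{\nu,k}$. Its total mass is $2\beta^2\sigma_1(\nu)=2\beta^2/(4(\nu+1))<\infty$, by the Rayleigh sum $\sigma_1(\nu)=1/(4(\nu+1))$, which follows from the canonical product behind \cref{eq:expectation_ct}. Symmetry is built in. Since $J_\nu$ has infinitely many positive zeros for $\nu>-1$, there are infinitely many distinct atoms, which accumulate only at $0$. Consequently the monic orthogonal polynomials exist for every degree, and no finite-rank degeneracy occurs. Symmetry forces $P_m(-x)=(-1)^mP_m(x)$, so the three-term recurrence has zero diagonal coefficients: $P_{m+1}=xP_m-A_m^2P_{m-1}$ with $A_m^2=\|P_m\|^2/\|P_{m-1}\|^2>0$. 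It remains only to identify $A_m^2$.

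To identify the coefficients, I would work through the Stieltjes transform. Set
\[
F(z)=\int\frac{d\Lom_{\nu,\beta}(x)}{z-x}=\sum_k\frac{\beta^2}{j_{\nu,k}^2}\cdot\frac{2z}{z^2-\beta^2/j_{\nu,k}^2}.
\]
Rescaling $w=\beta/z$ gives $F(z)=2z^{-1}\sum_k w^2/(j_{\nu,k}^2-w^2)$. The Mittag-Leffler expansion
\[
\frac{J_{\nu+1}(w)}{J_\nu(w)}=\sum_k\frac{2w}{j_{\nu,k}^2-w^2}
\]
then identifies $F(z)=z^{-1}\,w\,J_{\nu+1}(w)/J_\nu(w)$. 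Its moments are $\int x^{2m}\,d\Lom=2\beta^{2m+2}\sigma_{m+1}(\nu)$.

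Next I would apply the Bessel recurrence $J_{\mu-1}(w)/J_\mu(w)=2\mu/w-1/\bigl(J_\mu(w)/J_{\mu+1}(w)\bigr)$ repeatedly. This turns $J_{\nu+1}/J_\nu$ into Gauss's continued fraction
\[
\frac{J_{\nu+1}(w)}{J_\nu(w)}=\cfrac{w/(2(\nu+1))}{1-\cfrac{w^2/(4(\nu+1)(\nu+2))}{1-\cfrac{w^2/(4(\nu+2)(\nu+3))}{1-\cdots}}}.
\]
After substituting $w=\beta/z$ and multiplying through, this becomes a J-fraction
\[
F(z)=\cfrac{a_0}{z-\cfrac{A_1^2}{z-\cfrac{A_2^2}{z-\cdots}}}
\]
with $a_0=\beta^2/(2(\nu+1))$, which equals the total mass, and $A_m^2=\beta^2/(4(\nu+m)(\nu+m+1))$. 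By the standard correspondence between J-fractions and orthogonal polynomials, the denominators of the convergents are the monic orthogonal polynomials of the measure. The coefficients $A_m^2$ are therefore the recurrence coefficients.

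The main obstacle is justifying this identification rigorously. The continued fraction must be known to represent $F$, not merely formally. Since the support is bounded (in $[-\beta/j_{\nu,1},\beta/j_{\nu,1}]$), the moment problem is determinate. It therefore suffices to match the formal power series in $1/z$, that is, the moments, and Markov's theorem supplies convergence.

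Alternatively, I would bypass the continued fraction by citing Dickinson's orthogonality for modified Lommel polynomials $h_{m,\nu}(x)$. That orthogonality is with respect to masses $j_{\nu,k}^{-2}$ at $\pm1/j_{\nu,k}$, possibly with $\nu$ shifted by one, and the index conventions must be checked there. The dilation $x\mapsto\beta x$ multiplies each $A_m^2$ by $\beta^2$. The monic form follows from the Lommel recurrence $h_{m+1}=2(\nu+m)x\,h_m-h_{m-1}$ after normalizing the leading coefficients: this gives $A_m^2=1/(4(\nu+m)(\nu+m-1))$ in the shifted index, so the index bookkeeping is the point to verify against \cref{eq:A2}.
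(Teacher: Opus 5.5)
Your proof is correct, and for the key step it takes a different route from the paper. The paper does not derive the coefficients. It quotes Dickinson's orthogonality of $h_{m,\nu+1}$ at $\beta=1$; in your alternative the shift is $\mu=\nu+1$, since $h_{m,\mu}$ is orthogonal for masses at $\pm1/j_{\mu-1,k}$, and then the monic coefficient $1/(4(\mu+m)(\mu+m-1))$ is exactly \cref{eq:A2} at $\beta=1$. The paper then passes to general $\beta$ by the dilation $P_m^{(\beta)}(x)=\beta^mP_m^{(1)}(x/\beta)$.

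Your main route instead reproves the relevant half of Dickinson's theorem. The Mittag-Leffler expansion of $J_{\nu+1}/J_\nu$ identifies the Stieltjes transform, and the Bessel three-term recurrence generates the J-fraction. Because $\beta$ enters only through $w=\beta/z$, no separate dilation step is needed. This is self-contained and shows both why $a_0$ is the total mass and where the factors $(\nu+m)(\nu+m+1)$ come from. The paper's version is shorter but rests entirely on the citation.

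Two corrections to your write-up.

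\emph{The Stieltjes transform formula is off by a factor $z^2$.} The transform is $F(z)=2z\sum_k w^2/(j_{\nu,k}^2-w^2)=\beta\,J_{\nu+1}(w)/J_\nu(w)$, not $2z^{-1}\sum_k\cdots$. Your expression decays like $z^{-3}$ at infinity, rather than like $\beta^2/(2(\nu+1)z)$. The J-fraction you state, with $a_0=\beta^2/(2(\nu+1))$, follows only from the corrected formula.

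\emph{Determinacy and Markov's theorem are not needed.} They are not what makes the identification rigorous. Recurrence coefficients depend only on the moments. The coefficients of a J-fraction expansion of a formal Laurent series at infinity are read off uniquely from the series, step by step. Set $r_m=\frac{w}{2(\nu+m)}J_{\nu+m+1}(w)/J_{\nu+m}(w)$; this is a power series in $w^2$ without constant term, because $\nu+m>-1$. The Bessel recurrence then gives the exact identities $F=a_0/(z-zr_1)$ and $zr_m=A_m^2/(z-zr_{m+1})$. These identities are precisely that expansion, so the $A_m^2$ are the recurrence coefficients.
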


\begin{proof}
Using the Rayleigh identity $\sum_kj_{\nu,k}^{-2}=1/(4(\nu+1))$ \cite[\S15.51]{Watson}, $\Lom_{\nu,\beta}$ has total mass $\beta^2/(2(\nu+1))$.
At $\beta=1$, Dickinson's modified Lommel polynomials \cite{Dickinson} have monic normalization $P_m^{(1)}$ satisfying the recursion
\[P_{m+1}^{(1)}(x)=xP_m^{(1)}(x) - \frac{1}{4(\nu+m)(\nu+m+1)}P_{m-1}^{(1)}(x).\]
(See \cref{rem:lommel} below for the parameter identification.)
For every $\beta>0$, $\Lom_{\nu,\beta}$ is $\beta^2$ times the image of $\Lom_{\nu,1}$ under $x\mapsto\beta x$,
as can be seen by comparing atoms and masses. Its monic orthogonal polynomials are therefore
\[P_m^{(\beta)}(x)=\beta^mP_m^{(1)}(x/\beta).\]
Substitution into the unit-scale recurrence multiplies its last coefficient by $\beta^2$, giving \cref{eq:A2} for every positive $\beta$.
\end{proof}

\begin{remark}\label{rem:lommel}
For $\mu>0$, Watson's Lommel polynomials $R_{m,\mu}(z)$ are defined by the connection formula
\[J_{\mu+m}(z)=R_{m,\mu}(z)\,J_\mu(z)-R_{m-1,\mu+1}(z)\,J_{\mu-1}(z),\]
or equivalently by the recurrence
\[R_{m+1,\mu}(z)=(2(\mu+m)/z)\,R_{m,\mu}(z)-R_{m-1,\mu}(z),\]
with $R_{0,\mu}=1$ and $R_{1,\mu}(z)=2\mu/z$ \cite[\S9.6]{Watson}.
They are polynomials in $1/z$. Dickinson's modified polynomials $h_{m,\mu}(x):=R_{m,\mu}(1/x)$ are of degree $m$ in $x$ and satisfy the three-term recurrence
\[h_{m+1,\mu}(x)=2(\mu+m)\,x\,h_{m,\mu}(x)-h_{m-1,\mu}(x).\]
Dickinson \cite{Dickinson} identified the orthogonality measure of $\{h_{m,\mu}\}$ as the discrete
symmetric measure supported at the reciprocal zeros $\{\pm 1/j_{\mu-1,k}\}_{k\ge1}$; a textbook account
is \cite[Chapter~VI]{Chihara}. Taking $\mu=\nu+1>0$ and matching leading coefficients, the monic orthogonal polynomials of
$\Lom_{\nu,\beta}$ are scalar multiples of $h_{m,\nu+1}(x/\beta)$.
\end{remark}

\begin{proposition}\label{prop:bridge}
Let $\nu>-1$ and $\beta>0$.
The image of the restriction of $\Lom_{\nu,\beta}$ to $(0,\infty)$ under $x\mapsto2x^2/\beta^2$ is $(\beta^2/2)\,\rho_\nu$; at $\beta=\sqrt2$ the map is $x\mapsto x^2$ and the image is $\rho_\nu$.
For $a>0$, let $T_a$ be the exit time at radius $a$. Then $T_a\overset{d}{=}a^2T$.
The image of the restriction of $\Lom_{\nu,a\sqrt2}$ to $(0,\infty)$ under $x\mapsto x^2$ is $\sum_{k\ge1}a^2c_k\,\delta_{a^2c_k}$, the scale measure of $T_a$.
Moreover $\rho_\nu(\R)=S_1=\E[T]=1/(2\nu+2)$, and the cumulants of $T$ are
\begin{equation}\label{eq:cumulants}
\kappa_n(T)=(n-1)!\,S_n=(n-1)!\,2^n\sigma_n,\qquad
\sigma_n =
\begin{cases}
\frac{1}{4(\nu+1)}, & n=1,\\[2pt]
\frac{1}{n+\nu}\sum_{m=1}^{n-1}\sigma_m\sigma_{n-m}, & n\ge2.
\end{cases}
\end{equation}
\end{proposition}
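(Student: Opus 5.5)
The proposition has four parts: the pushforward identity, the radial scaling of exit times, the scale measure of $T_a$, and the cumulant formula with its recursion. I will take them in that order.

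\emph{Pushforward.} The restriction of $\Lom_{\nu,\beta}$ to $(0,\infty)$ is $\sum_k(\beta/j_{\nu,k})^2\delta_{\beta/j_{\nu,k}}$. Under $x\mapsto2x^2/\beta^2$ the atom $\beta/j_{\nu,k}$ goes to $2/j_{\nu,k}^2=c_k$, and its mass is $\beta^2/j_{\nu,k}^2=(\beta^2/2)c_k$. The image is therefore $(\beta^2/2)\sum_kc_k\delta_{c_k}=(\beta^2/2)\rho_\nu$. At $\beta=\sqrt2$ the map becomes $x\mapsto x^2$ and the factor becomes $1$.

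\emph{Scaling.} I will use Brownian/squared-Bessel scaling. If $Y$ solves \cref{eq:besq}, then $\tilde Y_t=a^{-2}Y_{a^2t}$ solves the same equation with the Brownian motion $\tilde B_t=a^{-1}B_{a^2t}$. Pathwise uniqueness of the nonnegative strong solution then makes $\tilde Y$ a squared Bessel process of the same index started at $0$. Since $R$ hits $a$ exactly when $\tilde R=a^{-1}R_{a^2\cdot}$ hits $1$, we get $T_a=a^2\tilde T$, and hence $T_a\overset d=a^2T$. By \cref{eq:ct}, $T_a\overset d=\sum_k a^2c_k e_k$, so its scale measure is $\sum_k a^2c_k\delta_{a^2c_k}$. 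Repeating the atom computation with $\beta=a\sqrt2$ and the map $x\mapsto x^2$ sends $a\sqrt2/j_{\nu,k}$ to $2a^2/j_{\nu,k}^2=a^2c_k$, with mass $2a^2/j_{\nu,k}^2=a^2c_k$, as claimed.

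\emph{Cumulants.} From \cref{eq:expectation_ct}, $\log\phi(\lambda)=-\sum_k\log(1+\lambda c_k)$. Expanding in powers of $\lambda$ for $|\lambda|<1/c_1$ and exchanging the sums, which is justified because $\sum_kc_k<\infty$ and $c_k\le c_1$, gives $\kappa_n=(n-1)!S_n$. Equivalently, this is the cumulant of a sum of independent exponentials. With $S_n=2^n\sigma_n$ and the Rayleigh identity $\sigma_1=1/(4(\nu+1))$ already used in \cref{lem:allbeta}, this yields $\rho_\nu(\R)=S_1=\E[T]=1/(2\nu+2)$.

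The recursion for $\sigma_n$ is the classical Rayleigh/Watson recursion, and I expect deriving it to be the only nontrivial step. I would obtain it from the Riccati equation for $y(z)=-\frac{d}{dz}\log\bigl(z^{-\nu}J_\nu(z)\bigr)=J_{\nu+1}(z)/J_\nu(z)$. By Bessel's equation, $y$ satisfies $y'=1-\tfrac{2\nu+1}{z}y+y^2$. From the canonical product, $y(z)=\sum_k\frac{2z}{j_{\nu,k}^2-z^2}=2\sum_{n\ge1}\sigma_nz^{2n-1}$.

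Substituting this series into the Riccati equation and comparing coefficients of $z^{2n-2}$ gives:
\begin{itemize}
\item at $n=1$, $2\sigma_1=1-(2\nu+1)2\sigma_1$, so $\sigma_1=1/(4(\nu+1))$;
\item for $n\ge2$, $2(2n-1)\sigma_n+2(2\nu+1)\sigma_n=4\sum_{m=1}^{n-1}\sigma_m\sigma_{n-m}$, which simplifies to $\sigma_n=\frac{1}{n+\nu}\sum_{m=1}^{n-1}\sigma_m\sigma_{n-m}$.
\end{itemize}
Alternatively, I would simply cite \cite[\S15.51]{Watson}.
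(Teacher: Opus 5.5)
Your proof is correct and follows the paper's argument: the same atom-by-atom pushforward computation, the same Brownian scaling (which you make explicit through the squared Bessel SDE and pathwise uniqueness), and the same logarithmic expansion of the product in \cref{eq:expectation_ct} giving $\kappa_n(T)=(n-1)!\,S_n$. The only difference is that the paper cites Kishore for the $\sigma_n$ recursion, whereas you derive it by comparing coefficients in the Riccati equation $y'=1-\frac{2\nu+1}{z}y+y^2$ for $y=J_{\nu+1}/J_\nu=2\sum_{n\ge1}\sigma_nz^{2n-1}$; that computation checks out, including $\sigma_1=1/(4(\nu+1))$, and makes the step self-contained.
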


\begin{proof}
The atom of $\Lom_{\nu,\beta}$ at $\beta/j_{\nu,k}$ has mass $(\beta/j_{\nu,k})^2=(\beta^2/2)c_k$ and
image point $2/j_{\nu,k}^2=c_k$, so the image measure is $(\beta^2/2)\rho_\nu$, which is $\rho_\nu$ at
$\beta=\sqrt2$. With $\beta=a\sqrt2$ and the map $x\mapsto x^2$, the atom at $a\sqrt2/j_{\nu,k}$ has
image point $a^2c_k$ and mass $a^2c_k$, and $T_a\overset d=a^2T$ follows by Brownian scaling.
The total mass is $2\sigma_1$, and $\E[T]=\sum_kc_k=S_1$ from \cref{eq:ct}.
Taking logarithms in \cref{eq:expectation_ct} and expanding $\log(1+\lambda c_k)$, we conclude for any $n\ge1$ that $\kappa_n(T)=(n-1)!S_n$,
and the recursion for $\sigma_n$ is that of \cite{Kishore}.
\end{proof}

\begin{proposition}\label{prop:jacobi}
Set $a_m^2=1/\bigl(2(\nu+m)(\nu+m+1)\bigr)$, the recurrence coefficients in \cref{eq:A2} at $\beta=\sqrt2$.
The monic orthogonal polynomials $p_n$ of $\rho_\nu$ satisfy $p_0(y)=1$, $p_1(y)=y-\alpha_0$, and
\begin{equation}\label{eq:fold}
p_{n+1}(y)=(y-\alpha_n)\,p_n(y)-\gamma_n\,p_{n-1}(y),\qquad n\ge1,
\end{equation}
with $\alpha_0=a_1^2$, $\alpha_n=a_{2n}^2+a_{2n+1}^2$, and $\gamma_n=a_{2n-1}^2a_{2n}^2$ for $n\ge1$;
all coefficients are rational functions of $\nu$.
\end{proposition}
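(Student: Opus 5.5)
The plan is the standard even-reduction (folding) of a symmetric orthogonal polynomial family. Apply \cref{lem:allbeta} at $\beta=\sqrt2$: the monic polynomials $P_m$ of $\Lom_{\nu,\sqrt2}$ satisfy $P_{m+1}=xP_m-a_m^2P_{m-1}$. Because the measure is symmetric, $P_m$ has the parity of $m$. In particular $P_{2n}(x)=p_n(x^2)$ for a monic polynomial $p_n$ of degree $n$. We will show these $p_n$ are the monic orthogonal polynomials of $\rho_\nu$. By \cref{prop:bridge}, for any polynomials $f,g$,
\[\int f(y)g(y)\,d\rho_\nu(y)=\int_{(0,\infty)} f(x^2)g(x^2)\,d\Lom_{\nu,\sqrt2}(x)=\tfrac12\int f(x^2)g(x^2)\,d\Lom_{\nu,\sqrt2}(x),\]
where the last step uses symmetry and the fact that there is no atom at $0$. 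Taking $f=p_n$ and $g=y^k$ with $k<n$, the right side is a multiple of $\int P_{2n}(x)x^{2k}\,d\Lom$, which vanishes. So $p_n$ is monic, has degree $n$, and is orthogonal to all lower-degree polynomials in $L^2(\rho_\nu)$. Since $\rho_\nu$ has infinitely many atoms, these are \emph{the} monic orthogonal polynomials.

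Next we derive the recurrence by eliminating the odd-indexed polynomials. Write the recurrence at indices $2n$ and $2n+1$:
\[P_{2n+1}=xP_{2n}-a_{2n}^2P_{2n-1},\qquad P_{2n+2}=xP_{2n+1}-a_{2n+1}^2P_{2n}.\]
Substituting the first into the second gives $P_{2n+2}=(x^2-a_{2n+1}^2)P_{2n}-a_{2n}^2\,xP_{2n-1}$. The recurrence at index $2n-1$ gives $xP_{2n-1}=P_{2n}+a_{2n-1}^2P_{2n-2}$. Plugging this in,
\[P_{2n+2}=(x^2-a_{2n}^2-a_{2n+1}^2)P_{2n}-a_{2n-1}^2a_{2n}^2P_{2n-2}.\]
Setting $y=x^2$ yields \cref{eq:fold} with $\alpha_n=a_{2n}^2+a_{2n+1}^2$ and $\gamma_n=a_{2n-1}^2a_{2n}^2$ for $n\ge1$.

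For the initial step, $P_2=x^2-a_1^2$ since $P_1=x$ and $P_0=1$. Hence $p_1(y)=y-a_1^2$, so $\alpha_0=a_1^2$. As a check, $\alpha_0$ should equal the ratio of the first moment to the mass, $S_2/S_1$. Using \cref{eq:cumulants}, $\sigma_2=\sigma_1^2/(\nu+2)$, so $S_2/S_1=2\sigma_2/\sigma_1=2\sigma_1/(\nu+2)=1/(2(\nu+1)(\nu+2))=a_1^2$, which agrees. Rationality in $\nu$ is immediate, because each $a_m^2=1/(2(\nu+m)(\nu+m+1))$ is rational in $\nu$ and has no poles for $\nu>-1$, $m\ge1$.

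There is no real obstacle. The only point needing care is the identification of the even polynomials as orthogonal for $\rho_\nu$: one needs the parity of $P_m$, which follows by induction from the recurrence having no $x^0$-term coefficient, and the fact that the atoms avoid $0$.
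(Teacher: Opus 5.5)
Your proposal is correct and follows essentially the same route as the paper: the even reduction $P_{2n}(x)=p_n(x^2)$ of the symmetric Lommel recurrence at $\beta=\sqrt2$, elimination of the odd degrees, and transfer of orthogonality through the pushforward under $x\mapsto x^2$. The paper phrases that last step as the image of the full measure being $2\rho_\nu$, with rescaling leaving monic orthogonal polynomials unchanged; your explicit elimination, initial step, and moment check $S_2/S_1=a_1^2$ fill in details the paper leaves implicit.
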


\begin{proof}
By \cref{lem:allbeta} the monic orthogonal polynomials $P_m$ of $\Lom_{\nu,\sqrt2}$ satisfy the recurrence
\[P_{m+1}(x)=xP_m(x)-a_m^2P_{m-1}(x).\]
Observing that the measure is symmetric, we use the even reduction of a symmetric Jacobi recurrence to write $P_{2n}(x)=q_n(x^2)$ for monic $q_n$.
Eliminating the odd degrees folds the recurrence in $x$ into a recurrence in $x^2$,
\[P_{2n+2}(x)=\bigl(x^2-a_{2n}^2-a_{2n+1}^2\bigr)P_{2n}(x)-a_{2n-1}^2a_{2n}^2\,P_{2n-2}(x),\]
so the $q_n$ satisfy \cref{eq:fold} with respect to the image of $\Lom_{\nu,\sqrt2}$ under $x\mapsto x^2$,
which is $2\rho_\nu$ by \cref{prop:bridge} and the collapse of the $\pm$ atoms.
Rescaling a measure by a positive constant leaves its monic orthogonal polynomials unchanged, so it follows that $p_n=q_n$.
\end{proof}

Two tridiagonal matrix realizations of the recursion in \cref{eq:fold} are
\begin{equation}\label{eq:matrices}
H_N:=\begin{pmatrix}\alpha_0&1&&\\\gamma_1&\alpha_1&\ddots&\\&\ddots&\ddots&1\\&&\gamma_{N-1}&\alpha_{N-1}\end{pmatrix} \quad\text{and}\quad
J_N:=\begin{pmatrix}\alpha_0&\sqrt{\gamma_1}&&\\\sqrt{\gamma_1}&\alpha_1&\ddots&\\&\ddots&\ddots&\sqrt{\gamma_{N-1}}\\&&\sqrt{\gamma_{N-1}}&\alpha_{N-1}\end{pmatrix},
\end{equation}
the companion form and the symmetric Jacobi matrix of the recurrence, respectively.
The entries of $H_N$ are rational functions of $\nu$. Furthermore, conjugation by the diagonal matrix with entries
$(\gamma_1\cdots\gamma_n)^{-1/2}$, $0\le n\le N-1$, carries $H_N$ to $J_N$, so the two share the same characteristic polynomial, which is $p_N$.

The rational matrix $H_N$ is convenient for exact algebraic calculations, and the symmetric tridiagonal matrix $J_N$ for
floating-point eigenvalue computations. Through its spectral decomposition, $J_N$ also provides a finite-dimensional construction of the Gaussian quadrature rule
and hence of the Gamma approximant introduced in \cref{sec:intro}.

\begin{proposition}
\label{prop:quadrature}
For every $N\ge1$, the Gauss nodes $\hat c_1,\ldots,\hat c_N$ of $\rho_\nu$ are the eigenvalues of $J_N$, equivalently of $H_N$.
If $v_i$ is a real unit eigenvector of $J_N$ associated with $\hat c_i$, then the corresponding quadrature weight is
\begin{equation}\label{eq:quadrature_weights}
\hat w_i=S_1v_i(1)^2,\quad\text{with}\quad \sum_{i=1}^N\hat w_i=S_1.
\end{equation}
In particular, with $e_1=(1,0,\ldots,0)^\top$, one has
\begin{equation}\label{eq:jacobi_quadrature}
\int f(y)\,d\rho_\nu(y) = S_1 e_1^\top f(J_N)e_1 = \sum_{i=1}^N\hat w_i f(\hat c_i), \qquad \deg f\le2N-1.
\end{equation}
The Laplace transform of the resulting Gamma approximant is
\begin{equation}\label{eq:phiN}
\phi_N(\lambda)=\E[e^{-\lambda\widehat T_N}] = \prod_{i=1}^N(1+\lambda\hat c_i)^{-\hat w_i/\hat c_i}, \qquad \lambda\ge0.
\end{equation}
\end{proposition}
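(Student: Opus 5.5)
The plan is to follow the classical Golub--Welsch argument, applied to the normalized measure $\rho_\nu/S_1$. By \cref{prop:jacobi}, the $p_n$ satisfy \cref{eq:fold}.

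First I identify the nodes. Consider the vector $\pi(y)=(\tilde p_0(y),\ldots,\tilde p_{N-1}(y))^\top$, where $\tilde p_n=p_n/(\gamma_1\cdots\gamma_n)^{1/2}$ are the orthonormal polynomials of $\rho_\nu$ up to the common factor $S_1^{-1/2}$. The recurrence \cref{eq:fold} reads
\[
J_N\pi(y)=y\,\pi(y)-\sqrt{\gamma_N}\,\tilde p_N(y)\,e_N.
\]
Hence $\pi(y)$ is an eigenvector of $J_N$ exactly when $p_N(y)=0$. Since $p_N$ is the characteristic polynomial of $J_N$ and of $H_N$, as noted after \cref{eq:matrices}, the eigenvalues of $J_N$ are the zeros of $p_N$. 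The zeros of the $N$th orthogonal polynomial are the Gauss nodes, so they are the eigenvalues of $J_N$. These zeros are real and simple, because $\rho_\nu$ has infinitely many atoms, by \cref{lem:allbeta} and \cref{prop:bridge}.

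Next I prove \cref{eq:jacobi_quadrature}. Let $M_N$ denote the multiplication-by-$y$ operator on polynomials of degree less than $N$, compressed by orthogonal projection. With respect to the orthonormal basis $\{S_1^{-1/2}\tilde p_n\}$ in $L^2(\rho_\nu)$, this operator has matrix $J_N$. I then show that
\[
\int y^k\,d\rho_\nu=S_1\,e_1^\top J_N^k e_1,\qquad k\le 2N-1,
\]
where the constant polynomial $1$ corresponds to $S_1^{1/2}e_1$. For $k\le 2N-1$, write $k=j+l$ with $j,l\le N-1$ and $l\le j+1$. Then
\[
\int y^k\,d\rho_\nu=\langle y^j\cdot 1,\;y^l\cdot 1\rangle_{L^2(\rho_\nu)}.
\]
Applying multiplication by $y$ up to $N-1$ times to the constant $1$ never leaves the span of $p_0,\ldots,p_{N-1}$, so $y^j\cdot 1$ and $y^l\cdot 1$ coincide with $J_N^j$ and $J_N^l$ applied to $S_1^{1/2}e_1$. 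For $k=2N-1$ this still works, because one factor has degree at most $N-1$ and the other at most $N$. The component of the degree-$N$ factor along $\tilde p_N$ is orthogonal to the degree-$(N-1)$ factor, so it drops out of the inner product. This bookkeeping is the main place where care is needed.

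Linearity then gives $\int f\,d\rho_\nu=S_1e_1^\top f(J_N)e_1$ for all $\deg f\le 2N-1$. Using the spectral decomposition $J_N=\sum_i\hat c_i v_iv_i^\top$, I obtain
\[
S_1e_1^\top f(J_N)e_1=\sum_i S_1 v_i(1)^2 f(\hat c_i).
\]
So the rule with nodes $\hat c_i$ and weights $S_1v_i(1)^2$ is exact to degree $2N-1$. By uniqueness of the $N$-point Gauss rule, which follows from Lagrange interpolation at the nodes, these are the Gauss weights, giving \cref{eq:quadrature_weights}. Taking $f=1$, or using $\sum_i v_i(1)^2=\|e_1\|^2=1$, gives $\sum_i\hat w_i=S_1$. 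The weights are positive since $v_i(1)=\tilde p_0(\hat c_i)/\|\pi(\hat c_i)\|\ne 0$.

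Finally, \cref{eq:phiN} is immediate. The $G_i$ are independent with $G_i\sim\Gamma(\hat w_i/\hat c_i,\hat c_i)$, and each has Laplace transform $(1+\lambda\hat c_i)^{-\hat w_i/\hat c_i}$ for $\lambda\ge0$. Here positivity of the shapes uses $\hat w_i>0$, and positivity of the nodes follows since the nodes lie in the convex hull of $\operatorname{supp}\rho_\nu\subset(0,c_1]$.
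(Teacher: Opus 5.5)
Your argument is correct, but it runs in the opposite direction from the paper's.

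The paper takes the Gauss rule (positive weights, exactness through degree $2N-1$) as given. It forms $V_{mi}=\sqrt{\hat w_i}\,\psi_{m-1}(\hat c_i)$ and uses exactness only for products of degree at most $2N-2$ to show $V$ is orthogonal. It then checks, via the recurrence and $\psi_N(\hat c_i)=0$, that the columns of $V$ are eigenvectors of $J_N$, and reads the weights off the first row. The identity $\int f\,d\rho_\nu=S_1e_1^\top f(J_N)e_1$ is derived afterwards from the spectral decomposition.

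You instead prove the Krylov moment identity $\int y^k\,d\rho_\nu=S_1e_1^\top J_N^ke_1$ for $k\le 2N-1$ directly, from $J_N$ being the compression of multiplication by $y$. The spectral decomposition then produces an exact rule with weights $S_1v_i(1)^2$, which you identify with the Gauss rule because weights at fixed distinct nodes are unique.

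Your route therefore builds the Gauss rule, and proves its exactness and positivity, from the three-term recurrence, rather than presupposing a positive $N$-point rule exact to degree $2N-1$. The price is the separate orthogonality bookkeeping at $k=2N-1$. The paper avoids that step, since it needs exactness only up to degree $2N-2$ to get orthogonality of $V$.

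There are two small slips. First, ``$k=j+l$ with $j,l\le N-1$'' cannot reach $k=2N-1$; state $j\le N$, $l\le N-1$ from the outset, as your next sentences effectively do. Second, since $c_k\to0$, the closed support of $\rho_\nu$ contains $0$, so ``$\operatorname{supp}\rho_\nu\subset(0,c_1]$'' is not accurate. Positivity of the nodes should come from the zeros lying in the open interval $(0,c_1)$. In your framework this is immediate: $x^\top J_Nx=\int y\,g(y)^2\,d\rho_\nu(y)>0$, where $g\neq0$ is the polynomial of degree less than $N$ with coordinate vector $x$. Alternatively, use the Rayleigh-quotient argument in the proof of \cref{prop:nodes}.
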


\begin{proof}
The characteristic polynomial of $J_N$ is $p_N$, whose zeros are the Gauss nodes.
We next identify the weights. For this purpose, let $\psi_m$ be the orthonormal polynomials of $\rho_\nu$ with positive leading
coefficients, so that $\psi_0=S_1^{-1/2}$, and form the matrix
\[V_{mi}=\sqrt{\hat w_i}\,\psi_{m-1}(\hat c_i), \qquad 1\le m,i\le N.\]
Since $\deg(\psi_{m-1}\psi_{n-1})\le2N-2$, by Gaussian exactness and orthonormality one has
\[(VV^\top)_{mn}=\sum_{i=1}^N\hat w_i\,\psi_{m-1}(\hat c_i)\,\psi_{n-1}(\hat c_i)=\int\psi_{m-1}\psi_{n-1}\,d\rho_\nu=\delta_{mn},\]
so that $V$ is orthogonal.
The orthonormal recurrence and $\psi_N(\hat c_i)=0$ show that the $i$th column of $V$ is an eigenvector of $J_N$ with eigenvalue $\hat c_i$.
These columns are orthonormal, and their first entries are $\sqrt{\hat w_i/S_1}$.
We have thus established the weight formula, the Jacobi-matrix realization of Gaussian quadrature \cite{GolubWelsch}.
\Cref{eq:jacobi_quadrature} then follows from the spectral decomposition of $J_N$, and taking $f=1$ recovers the total weight.
Finally, independence of the Gamma factors establishes \cref{eq:phiN}.
\end{proof}

\begin{remark}\label{rem:ikebe}
The tridiagonal construction in \cref{eq:matrices} is related, through the variable change $y=2/\rho^2$ and the folding of \cref{prop:jacobi},
to eigenvalue methods for Bessel zeros \cite{GradZakrajsek}; analogous methods for Coulomb zeros are developed in \cite{Ikebe}.
Here the full spectrum of $J_N$ and its spectral weights form a Gaussian quadrature rule.
The quadrature uses every node, including those that approximate individual atoms of $\rho_\nu$ less accurately.
Together, the nodes and weights determine the unique Gamma convolution with at most $N$ factors matching the first $2N$ cumulants of the exit law (\cref{thm:intro_match}).
\end{remark}

For the distributional estimate in \cref{thm:intro_convergence}(ii), we will use the Gamma factor associated with the largest quadrature node.
The following bounds keep its scale bounded away from zero uniformly in $N$ for fixed $\nu$ and show that its shape is greater than one for every $N$.

\begin{proposition}
\label{prop:nodes}
For every $N\ge1$, the quadrature nodes satisfy
\[0<\hat c_1<\cdots<\hat c_N<c_1,\]
and the largest node is strictly increasing with $N$. Moreover,
\begin{equation}\label{eq:node_bounds}
\hat c_N\ge a_1^2=\frac{1}{2(\nu+1)(\nu+2)}, \qquad \hat w_N\ge c_1,
\end{equation}
and consequently $\hat w_N/\hat c_N>1$.
\end{proposition}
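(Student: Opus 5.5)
We prove the node bounds first and then the bounds on the largest node and its weight. Since $\rho_\nu$ has infinitely many atoms, all in $(0,c_1]$ with $c_1$ the largest, the zeros of $p_N$ are real, simple and lie in the open interval $(0,c_1)$. They lie strictly inside the convex hull of the support, and $c_1$ is an isolated support point. This is standard: if $p_N$ had fewer than $N$ sign changes in $(0,c_1)$, the product of $p_N$ with the polynomial vanishing at those sign changes would have a constant sign on the support and a nonzero integral, contradicting orthogonality; a zero at $c_1$ itself is excluded the same way. The strict monotonicity of the largest node in $N$ follows from Cauchy interlacing applied to the nested principal submatrices $J_N\subset J_{N+1}$. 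Strictness holds because consecutive zeros of $p_N$ and $p_{N+1}$ strictly interlace, which uses $\gamma_N>0$ and the recurrence \cref{eq:fold}.

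For $\hat c_N\ge a_1^2$, we first note that $\hat c_1^{(1)}=\alpha_0=a_1^2$ is the single node when $N=1$. Monotonicity then gives $\hat c_N\ge\hat c_1=a_1^2$ for all $N$. Alternatively, the Rayleigh quotient gives $\hat c_N=\lambda_{\max}(J_N)\ge e_1^\top J_Ne_1=\alpha_0$. This also confirms $a_1^2=\sigma$-data: by \cref{prop:bridge}, $\alpha_0=S_2/S_1=1/(2(\nu+1)(\nu+2))$.

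The main step is the weight bound $\hat w_N\ge c_1$. The plan is to use a Christoffel-type extremal argument with the polynomial
\[\ell(y)=\prod_{i<N}\frac{y-\hat c_i}{\hat c_N-\hat c_i},\qquad f=\ell^2,\]
which has degree $2N-2$. Gaussian exactness gives
\[\hat w_N=\sum_i\hat w_if(\hat c_i)=\int f\,d\rho_\nu\ge c_1f(c_1).\]
Here we keep only the atom at $c_1$, which has mass $c_1$. Since $c_1>\hat c_N>\hat c_i$ for all $i<N$, every factor of $\ell(c_1)$ is larger than $1$, so $f(c_1)>1$. Hence $\hat w_N>c_1$, and in particular $\hat w_N\ge c_1$. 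Finally, $\hat w_N/\hat c_N>c_1/c_1=1$, because $\hat c_N<c_1$.

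We expect the only delicate point to be the strictness in the interlacing and the exclusion of $c_1$ as a zero. Both reduce to the fact that $\rho_\nu$ has infinitely many support points, so that $\int g\,d\rho_\nu>0$ for every nonzero nonnegative polynomial $g$.
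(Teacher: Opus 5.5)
Your proposal is correct and follows the paper's proof. You locate the zeros in $(0,c_1)$, use strict interlacing for the growth of the largest node, take the one-point node $\alpha_0=a_1^2$, and keep only the atom at $c_1$ in the exact rule for $\ell^2$. The one small difference is that you locate the zeros by the sign-change argument, whereas the paper writes each zero $x$ as $\int y\,q^2\,d\rho_\nu/\int q^2\,d\rho_\nu$ with $q=p_N/(y-x)$.

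There is a minor slip: for $N=1$ the product defining $\ell$ is empty, so $f(c_1)=1$ rather than $f(c_1)>1$. This does not affect the result, since $f(c_1)\ge1$ is all you need for $\hat w_N\ge c_1$ and hence $\hat w_N/\hat c_N>1$.
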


\begin{proof}
The zeros of $p_N$ are real and simple, the zeros of consecutive orthogonal polynomials strictly interlace \cite{Chihara}, and all zeros lie in $(0,c_1)$.
Indeed, consider a zero $x$ of $p_N$ and put $q(y)=p_N(y)/(y-x)$. By orthogonality we have
\[0 = \int p_N(y)q(y)\,d\rho_\nu(y) = \int (y-x)q(y)^2\,d\rho_\nu(y),\]
whence
\[x=\frac{\int yq(y)^2\,d\rho_\nu(y)}{\int q(y)^2\,d\rho_\nu(y)}\in(0,c_1).\]
Here $x>0$ because every atom of $\rho_\nu$ is positive, and $x<c_1$ because $q$, having at most $N-1$ zeros, is nonzero at some atom $c_k$ with $k\ge2$.
Strict interlacing places the largest zero of $p_N$ strictly below the largest zero of $p_{N+1}$, so the largest node increases strictly from the $N$-point rule to the $(N+1)$-point rule.
The one-point rule has node $\alpha_0=a_1^2$, whence the first bound in \cref{eq:node_bounds}.

For the weight bound, let
\[\ell_N(y)=\prod_{i=1}^{N-1} \frac{y-\hat c_i}{\hat c_N-\hat c_i},\]
with the empty product interpreted as $1$ when $N=1$.
Since $\deg\ell_N^2=2N-2$ and $\ell_N(\hat c_i)=\delta_{iN}$, by Gaussian exactness
\[\hat w_N=\int\ell_N(y)^2\,d\rho_\nu(y) \ge c_1\ell_N(c_1)^2\ge c_1,\]
with the last inequality following from $c_1>\hat c_N>\hat c_i$ for $i<N$.
\end{proof}

\section{Proofs of the main theorems}\label{sec:approx}

The proof of \cref{thm:intro_match} translates cumulant matching into a Gaussian quadrature problem.
An exact quadrature remainder then yields the transform estimate in \cref{thm:intro_convergence}(i), and a general inversion
inequality for Gamma convolutions turns it into the distributional estimate in (ii).

\subsection{Moment optimality}

\begin{proof}[Proof of \cref{thm:intro_match}.]
For a convolution of Gamma laws with shape $\alpha_i$ and scale $c_i$,
\[\kappa_n=(n-1)!\sum_i\alpha_ic_i^n=(n-1)!\int y^{n-1}\,d\mu(y),\]
with $\mu=\sum_i\alpha_ic_i\delta_{c_i}$, a positive measure with at most $N$ atoms.
The exit law has the same form with $\rho_\nu$ in place of $\mu$, by \cref{eq:cumulants}.
In particular this means that matching $\kappa_1,\dots,\kappa_{2N}$ is matching the moments
of order $0,\dots,2N-1$ of $\rho_\nu$, and the Gauss measure $\sum_i\hat w_i\delta_{\hat c_i}$ matches by construction.
For uniqueness, it is enough to show that any matching measure $\mu$ is the Gauss measure.
Let $q$ be the monic node polynomial of $\mu$, of degree at most $N$.
If $\deg q\le N-1$, then $q^2$ has degree at most $2N-2$ and $\int q^2\,d\rho_\nu=\int q^2\,d\mu=0$.
But this is impossible because $\rho_\nu$ has infinitely many atoms, so $\mu$ must have precisely $N$ atoms.
For every polynomial $r$ of degree less than $N$, the product $qr$ has degree at most $2N-1$,
so $\int qr\,d\rho_\nu=\int qr\,d\mu=0$, hence $q=p_N$ and the atoms are the Gauss nodes.
The weights are then determined by the moments of order $0,\dots,N-1$ through an invertible Vandermonde system, so $\mu$ is the Gauss measure.
For the final assertion, matching $\kappa_1,\dots,\kappa_{2N+1}$ is matching the moments of order $0,\dots,2N$, and then
$\int q^2\,d\rho_\nu=\int q^2\,d\mu=0$ with $\deg q^2\le2N$, which is again impossible.
\end{proof}

The polynomial $y^{2N}-p_N(y)^2$ has degree at most $2N-1$, and, therefore, the mismatch at the next order equals
\[\kappa_{2N+1}(T)-\kappa_{2N+1}(\widehat T_N) = (2N)!\int p_N(y)^2\,d\rho_\nu(y)>0.\]
The same squared norm governs the quadrature remainder used below.

\subsection{The quadrature remainder}

Recall the slit plane $\mathcal D_\nu$ from the introduction.
For $z\in\mathcal D_\nu$, write
\[Q(z)=\sum_{k\ge1}\log(1+zc_k)\quad\text{and}\quad Q_N(z)=\sum_{i=1}^N\frac{\hat w_i}{\hat c_i}\log(1+z\hat c_i),\]
and set
\[h_N=\int p_N(y)^2\,d\rho_\nu(y).\]
The following lemma evaluates the squared norm $h_N$ in closed form and expresses the quadrature remainder through it.

\begin{lemma}\label{lem:gauss_remainder}
For every $N\ge1$,
\begin{equation}\label{eq:gauss_norm}
h_N=S_1\prod_{j=1}^N\gamma_j = \frac{\Gamma(\nu+1)^2}{2^{2N+1}\Gamma(\nu+2N+1)\Gamma(\nu+2N+2)}.
\end{equation}
The functions $Q$ and $Q_N$ are analytic on $\mathcal D_\nu$, and
\begin{equation}\label{eq:stieltjes_remainder}
Q'(z)-Q_N'(z) = \frac{z^{2N}}{\prod_{i=1}^N(1+z\hat c_i)^2}\int\frac{p_N(y)^2}{1+zy}\,d\rho_\nu(y),\qquad z\in\mathcal D_\nu.
\end{equation}
In particular,
\[0\le Q(\lambda)-Q_N(\lambda)\le\frac{h_N\lambda^{2N+1}}{2N+1},\qquad \lambda\ge0.\]
\end{lemma}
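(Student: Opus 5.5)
The proof has three parts, taken in order: the norm formula, analyticity together with the remainder identity, and the real bound.

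\emph{Norm formula.} For the first equality in \cref{eq:gauss_norm} I use the standard fact for monic orthogonal polynomials with recurrence \cref{eq:fold}: $h_n=\gamma_n h_{n-1}$. This follows by integrating $p_n\cdot(\text{recurrence for }p_{n})$ against $\rho_\nu$ and using orthogonality. Since $h_0=\rho_\nu(\R)=S_1=1/(2(\nu+1))$, we get $h_N=S_1\prod_{j\le N}\gamma_j$.

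Next I substitute $\gamma_j=a_{2j-1}^2a_{2j}^2$ from \cref{prop:jacobi}, which gives
\[\prod_{j=1}^N\gamma_j=\prod_{m=1}^{2N}a_m^2=\prod_{m=1}^{2N}\frac{1}{2(\nu+m)(\nu+m+1)}=\frac{1}{2^{2N}}\frac{\Gamma(\nu+1)}{\Gamma(\nu+2N+1)}\frac{\Gamma(\nu+2)}{\Gamma(\nu+2N+2)}.\]
Multiplying by $S_1=\Gamma(\nu+1)/(2\Gamma(\nu+2))$ yields the stated closed form.

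\emph{Analyticity.} Each term $\log(1+zc_k)$ is analytic off $(-\infty,-1/c_k]\subset(-\infty,-1/c_1]$. The sum converges locally uniformly on $\mathcal D_\nu$, since $|\log(1+zc_k)|\le C_K c_k$ on compacts $K$ and $\sum c_k<\infty$. For $Q_N$, \cref{prop:nodes} gives $\hat c_i<c_1$, so $-1/\hat c_i<-1/c_1$ and each singularity lies inside the slit. Differentiating termwise gives the Stieltjes forms $Q'(z)=\int d\rho_\nu(y)/(1+zy)$ and $Q_N'(z)=\sum_i\hat w_i/(1+z\hat c_i)$.

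\emph{Remainder identity.} Let $\pi(z)=\prod_i(1+z\hat c_i)$. The key algebraic step is the polynomial division
\[\frac{1}{1+zy}=\frac{1}{1+zy}\cdot\frac{z^{2N}p_N(y)^2}{\pi(z)^2}+R_z(y),\]
where $R_z(y)$ is a polynomial in $y$ of degree at most $2N-1$. To see this, write $u=-1/z$. Then $\pi(z)^2=z^{2N}p_N(u)^2$ up to the normalization $\pi(z)=z^N\prod_i(1/z+\hat c_i)$, that is, $\pi(z)=(-z)^Np_N(-1/z)$. Hence
\[1-\frac{z^{2N}p_N(y)^2}{\pi(z)^2}=\frac{p_N(u)^2-p_N(y)^2}{p_N(u)^2},\]
and the numerator vanishes at $y=u$. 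Dividing by $1+zy=z(y-u)$ therefore leaves a polynomial in $y$ of degree at most $2N-1$.

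I then integrate this decomposition against $\rho_\nu$ and against the Gauss measure and subtract. By Gaussian exactness the $R_z$ terms cancel. The $p_N^2$ term vanishes on the Gauss measure because $p_N(\hat c_i)=0$. This gives \cref{eq:stieltjes_remainder}. The identity is first obtained for $z$ where nothing degenerates, and then extends to all of $\mathcal D_\nu$ because both sides are analytic there. I expect this division step to need the most care, in particular the sign convention relating $\pi(z)$ to $p_N(-1/z)$; everything else is bookkeeping.

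\emph{Real bound.} For $\lambda\ge0$ the integrand in \cref{eq:stieltjes_remainder} is nonnegative, so the remainder is $\ge0$. Moreover $\pi(\lambda)\ge1$ and $1+\lambda y\ge1$, so $0\le Q'(t)-Q_N'(t)\le h_N t^{2N}$ for $t\ge0$. Since $Q(0)=Q_N(0)=0$, integrating from $0$ to $\lambda$ gives $0\le Q(\lambda)-Q_N(\lambda)\le h_N\lambda^{2N+1}/(2N+1)$.
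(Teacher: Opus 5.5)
Your proposal is correct and follows essentially the same route as the paper. It uses $h_N=\gamma_Nh_{N-1}$ with $h_0=S_1$ for the norm, termwise analyticity, the same division of $p_N(w)^2-p_N(y)^2$ by $w-y$ (written in $z$ via $w=-1/z$ and $\pi(z)=(-z)^Np_N(-1/z)$) together with Gaussian exactness and $p_N(\hat c_i)=0$, and finally integration of the bound $h_Nt^{2N}$ over $[0,\lambda]$. The only cosmetic difference is at $z=0$: you extend by analyticity, whereas the paper checks directly that both sides vanish there.
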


\begin{proof}
From the monic recurrence, $h_N=\gamma_Nh_{N-1}$ and $h_0=S_1$. Substituting the coefficients in \cref{prop:jacobi} yields
\[h_N=S_1\prod_{m=1}^{2N}a_m^2 = \frac1{2(\nu+1)}\prod_{m=1}^{2N}\frac1{2(\nu+m)(\nu+m+1)},\]
which is \cref{eq:gauss_norm}.

Each logarithm defining $Q$ is analytic on $\mathcal D_\nu$. It is not difficult to check that the series
converges locally uniformly there, because on any compact set its sufficiently late terms are bounded by a constant times $c_k$ and $\sum_kc_k=S_1<\infty$.
The nodes lie in $(0,c_1)$ by \cref{prop:nodes}, so $Q_N$ is analytic on the same domain. Differentiating we have
\[Q'(z)=\int\frac{d\rho_\nu(y)}{1+zy}\quad\text{and}\quad Q_N'(z)=\sum_{i=1}^N\frac{\hat w_i}{1+z\hat c_i}.\]
For $w\in\C\setminus[0,c_1]$, the expression
\[\frac{p_N(w)^2-p_N(y)^2}{w-y}\]
is a polynomial of degree $2N-1$ in $y$. By Gaussian exactness and $p_N(\hat c_i)=0$, therefore,
\[p_N(w)^2\int\frac{d\rho_\nu(y)}{w-y} - \int\frac{p_N(y)^2}{w-y}\,d\rho_\nu(y) = p_N(w)^2\sum_{i=1}^N\frac{\hat w_i}{w-\hat c_i}.\]
All zeros of $p_N$ lie in $(0,c_1)$, so division by $p_N(w)^2$ is valid. For $z\in\mathcal D_\nu$ with $z\ne0$, insert $w=-1/z$ and use
\[p_N(-1/z)=(-1/z)^N\prod_{i=1}^N(1+z\hat c_i).\]
Rearranging, one arrives at \cref{eq:stieltjes_remainder} for $z\ne0$. At $z=0$ both sides vanish, the left because
$Q'(0)=\rho_\nu(\R)=\sum_{i=1}^N\hat w_i=Q_N'(0)$ by \cref{eq:quadrature_weights}, and the right because of the factor $z^{2N}$.
For $0\le t\le\lambda$ every factor on the right side of \cref{eq:stieltjes_remainder} is nonnegative, and $1+t\hat c_i\ge1$, $1+ty\ge1$ on the support of $\rho_\nu$ imply
\[0\le Q'(t)-Q_N'(t)=\frac{t^{2N}}{\prod_{i=1}^N(1+t\hat c_i)^2}\int\frac{p_N(y)^2}{1+ty}\,d\rho_\nu(y)\le t^{2N}\int p_N(y)^2\,d\rho_\nu(y)=h_Nt^{2N}.\]
Since $Q(0)=Q_N(0)=0$, integrating over $0\le t\le\lambda$ we have
\[0\le Q(\lambda)-Q_N(\lambda)=\int_0^\lambda\bigl(Q'(t)-Q_N'(t)\bigr)\,dt\le h_N\int_0^\lambda t^{2N}\,dt=\frac{h_N\lambda^{2N+1}}{2N+1},\]
which is the final bound.
\end{proof}

\subsection{Convergence of the Laplace transforms}

\begin{proof}[Proof of \cref{thm:intro_convergence}(i).]
Analyticity follows from \cref{lem:gauss_remainder}, and the functions agree with the Laplace transforms on $\Re z>0$ by \cref{eq:expectation_ct,eq:phiN}.
The radial segments joining zero to the points of $K$ form a compact subset of $\mathcal D_\nu$. Consequently
\[L_K:=\sup_{z\in K}|z|<\infty\quad\text{and}\quad m_K:=\inf_{\substack{z\in K,\ 0\le u\le1\\0\le y\le c_1}}|1+uzy|>0.\]
The positivity of $m_K$ follows because $1+uzy=0$ would require $z\in(-\infty,-1/c_1]$.
Since $\mathcal D_\nu$ is star-shaped about $0$ and $Q(0)=Q_N(0)=0$,
\[Q(z)-Q_N(z)=\int_0^1\bigl(Q'-Q_N'\bigr)(uz)\,z\,du,\qquad z\in K.\]
In \cref{eq:stieltjes_remainder} at $uz$, the numerator is at most $u^{2N}L_K^{2N}$ in modulus, each factor
$|1+uz\hat c_i|$ and each $|1+uzy|$ on the support of $\rho_\nu$ is at least $m_K$,
and $\int p_N^2\,d\rho_\nu=h_N$. Hence
\[|(Q'-Q_N')(uz)|\le h_Nu^{2N}L_K^{2N}m_K^{-2N-1},\]
and integrating against $|z|\,du\le L_K\,du$ we have
\begin{equation}\label{eq:compact_remainder}
\sup_{z\in K}|Q(z)-Q_N(z)| \le \frac{h_N}{2N+1}\left(\frac{L_K}{m_K}\right)^{2N+1}.
\end{equation}
Stirling's formula in \cref{eq:gauss_norm} gives $\log h_N=-4N\log N+O_\nu(N)$, proving the asserted bound for the logarithms. Finally,
\[|e^{-Q(z)}-e^{-Q_N(z)}|\le |Q(z)-Q_N(z)|\exp\bigl(\max\{-\Re Q(z),-\Re Q_N(z)\}\bigr).\]
Since $Q$ is continuous there and \cref{eq:compact_remainder} bounds $Q-Q_N$ uniformly, we thus see that the
exponential factor is uniformly bounded on $K$ and over $N$, which proves the transform bound.
\end{proof}

\subsection{Convergence of the distribution functions}

The distributional estimate rests on an inequality that does not involve a Bessel function.
It applies to any two convolutions of Gamma laws that each contain a factor of shape at least one, and it bounds
their Kolmogorov distance by the transform difference on an imaginary-axis interval and by the transforms at its endpoint.

\begin{lemma}\label{lem:transfer}
Let $X$ and $\widehat X$ be finite or convergent countable sums of independent Gamma variables, with distribution functions
$F$ and $\widehat F$ and transforms
\[\phi(z)=\E[e^{-zX}] \quad\text{and}\quad \widehat\phi(z)=\E[e^{-z\widehat X}].\]
Fix $H>0$, and suppose that each sum contains a summand whose shape is at least one and whose scale $c$ satisfies $Hc\ge1$.
If $|\widehat\phi(iu)-\phi(iu)|\le\varepsilon(u)$ for $0\le u\le H$, then
\[\sup_{x\in\R}|\widehat F(x)-F(x)|\le\frac1\pi\left[\int_0^H\varepsilon(u)\,\frac{du}{u}+2\sqrt2\,|\phi(iH)|+\sqrt2\,\varepsilon(H)\right].\]
\end{lemma}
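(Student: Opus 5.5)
The plan is to use the Gil-Pelaez inversion formula for each law and split the frequency integral at $H$. Both $X$ and $\widehat X$ have continuous distributions, since each contains a Gamma factor, and their characteristic functions are integrable near infinity. For every $x$,
\[F(x)=\frac12-\frac1\pi\int_0^\infty \frac{\Im\bigl(e^{iux}\phi(iu)\bigr)}{u}\,du,\]
where $\phi(iu)=\E[e^{-iuX}]$. The analogous formula holds for $\widehat F$. Subtracting the two formulas, the region $0\le u\le H$ contributes at most $\frac1\pi\int_0^H \varepsilon(u)\,du/u$. The integral converges at $0$ in the intended applications; if it diverges, the bound is vacuous. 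Thus the task reduces to bounding
\[\frac1\pi\left|\int_H^\infty \frac{\Im\bigl(e^{iux}(\widehat\phi(iu)-\phi(iu))\bigr)}{u}\,du\right|.\]
This tail is at most $\frac1\pi\int_H^\infty (|\phi(iu)|+|\widehat\phi(iu)|)\,du/u$.

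The key step is a tail bound for a single transform. Write $\phi=\phi_1\psi$, where $\phi_1(iu)=(1+icu)^{-b}$ is the distinguished factor with $b\ge1$ and $Hc\ge1$, and $\psi$ collects the remaining factors. Each Gamma factor has modulus $|1+ic'u|^{-b'}$, which is nonincreasing in $u\ge0$. Hence $|\phi(iu)|$ is nonincreasing, and for $u\ge H$,
\[|\phi(iu)|\le |\psi(iH)|\,(1+c^2u^2)^{-b/2}\le |\psi(iH)|\,(1+c^2u^2)^{-1/2}.\]
I then compare this with $|\phi(iH)|=|\psi(iH)|(1+c^2H^2)^{-b/2}$. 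Because the exponent $b$ may exceed one, I will use the ratio form instead:
\[\frac{|\phi(iu)|}{|\phi(iH)|}\le\left(\frac{1+c^2H^2}{1+c^2u^2}\right)^{b/2}\le \frac{\sqrt{1+c^2H^2}}{\sqrt{1+c^2u^2}}\le \frac{\sqrt2\,cH}{cu}=\frac{\sqrt2 H}{u}.\]
The second inequality holds because the ratio inside is at most $1$ and $b\ge1$. The third uses $cH\ge1$, which gives $1+c^2H^2\le 2c^2H^2$. Consequently
\[\int_H^\infty |\phi(iu)|\frac{du}{u}\le \sqrt2 H|\phi(iH)|\int_H^\infty \frac{du}{u^2}=\sqrt2|\phi(iH)|.\]
The same bound holds for $\widehat\phi$, which gives $\sqrt2|\widehat\phi(iH)|\le\sqrt2|\phi(iH)|+\sqrt2\varepsilon(H)$. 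Adding the two tail contributions yields $2\sqrt2|\phi(iH)|+\sqrt2\varepsilon(H)$, and together with the low-frequency term this is exactly the stated inequality.

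The main obstacle is justifying Gil-Pelaez pointwise with an absolutely convergent tail and controlling the behaviour near $u=0$. In the subtracted form, the difference of the two integrands is $O(1)$ in $u$ near zero only if $\varepsilon(u)/u$ is integrable, which is the assumption implicit in the bound. I would handle convergence by using the symmetric truncated formula $\lim_{T\to\infty}\int_{1/T}^T$, which is valid for continuous distribution functions. The tail estimate above shows that each piece is absolutely integrable on $[H,\infty)$, so the limits can be taken separately. For countable sums, the monotonicity of $|\phi(iu)|$ still holds as a limit of products of moduli at most one, and convergence of $\sum c_k$ ensures that $\psi$ is a genuine characteristic function.
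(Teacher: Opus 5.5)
Your proposal is correct and follows the paper's proof essentially step for step. Inversion reduces the distance to $\frac1\pi\int_0^\infty|\widehat\phi(iu)-\phi(iu)|\,du/u$, the range $[0,H]$ is bounded by $\varepsilon(u)/u$, and beyond $H$ both transforms are handled by the same ratio bound $|\phi(iu)|/|\phi(iH)|\le\sqrt2\,H/u$ from the distinguished factor, together with $|\widehat\phi(iH)|\le|\phi(iH)|+\varepsilon(H)$.

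There are two slips, and neither affects the argument:
\begin{itemize}
\item With $\phi(iu)=\E[e^{-iuX}]$, the Gil-Pelaez formula reads $F(x)=\frac12+\frac1\pi\int_0^\infty\Im\bigl(e^{iux}\phi(iu)\bigr)\,du/u$, with a plus sign rather than a minus. Since you subtract the two formulas and take absolute values, the sign does not matter.
\item The characteristic functions need not be integrable at infinity; a lone shape-one factor decays only like $1/u$. You never use that claim, though. You only need integrability of $|\phi(iu)|/u$ on $[H,\infty)$, and your tail bound supplies it.
\end{itemize}
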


\begin{proof}
Both distribution functions are continuous, since each law is a convolution containing a Gamma density.
By Fourier inversion for continuous distribution functions \cite{Feller},
\[\sup_{x\in\R}|\widehat F(x)-F(x)|\le\frac1\pi\int_0^\infty\frac{|\widehat\phi(iu)-\phi(iu)|}{u}\,du\]
whenever the right side is finite, and on $[0,H]$ the integrand is at most $\varepsilon(u)/u$.

Now we consider the frequencies beyond $H$. Write $\widehat X=\widehat G+\widehat Y$, where $\widehat G\sim\Gamma(\hat b,\hat c)$
is the distinguished summand and $\widehat Y$ is the sum of the others. The modulus $|\E[e^{-iu\widehat Y}]|$ is a product of
factors $(1+u^2c_i^2)^{-b_i/2}$ and is therefore nonincreasing in $u\ge0$. So for $u\ge H$
\[\frac{|\widehat\phi(iu)|}{|\widehat\phi(iH)|}\le\left(\frac{1+H^2\hat c^2}{1+u^2\hat c^2}\right)^{\hat b/2}
\le\left(\frac{1+H^2\hat c^2}{1+u^2\hat c^2}\right)^{1/2}\le\sqrt2\,\frac Hu,\]
where the second inequality uses $\hat b\ge1$ together with a ratio at most one, and the third uses $H\hat c\ge1$.
Integrating over $u\ge H$,
\[\int_H^\infty\frac{|\widehat\phi(iu)|}{u}\,du\le\sqrt2\,|\widehat\phi(iH)|,\]
and the same argument shows that $\int_H^\infty|\phi(iu)|\,du/u\le\sqrt2\,|\phi(iH)|$.
Therefore, using $|\widehat\phi(iH)|\le|\phi(iH)|+\varepsilon(H)$, the integral over $[H,\infty)$ is at most
$2\sqrt2\,|\phi(iH)|+\sqrt2\,\varepsilon(H)$, and adding the contribution of $[0,H]$ proves the lemma.
\end{proof}

\begin{proof}[Proof of \cref{thm:intro_convergence}(ii).]
We verify the hypotheses of \cref{lem:transfer} for $X=T$ and $\widehat X=\widehat T_N$, with $H=\eta N^2$ for a constant $\eta>0$ to be chosen.
For $u\ge0$ and $y\ge0$, $|1+iuy|\ge1$, so from \cref{eq:stieltjes_remainder} and radial integration
\[|Q(iu)-Q_N(iu)|\le\frac{h_Nu^{2N+1}}{2N+1}.\]
Both $Q(iu)$ and $Q_N(iu)$ have nonnegative real part, so
\[|\phi(iu)-\phi_N(iu)|\le\varepsilon(u):=\frac{h_Nu^{2N+1}}{2N+1},\qquad u\ge0.\]
Here we are using the exponential difference estimate from the proof of (i). Integrating, we obtain
\begin{equation}\label{eq:cdf_low_frequency}
\int_0^H\varepsilon(u)\,\frac{du}{u}=\frac{\varepsilon(H)}{2N+1}.
\end{equation}
The exit law contains the exponential factor $c_1e_1\sim\Gamma(1,c_1)$ of \cref{eq:ct}, and $\widehat T_N$ contains the factor
$\Gamma(b_N,\hat c_N)$ attached to the largest node, with $b_N=\hat w_N/\hat c_N>1$ by \cref{prop:nodes}.
The same proposition supplies $\hat c_N\ge a_1^2$. Thus $H\hat c_N\ge1$ as long as $H\ge a_1^{-2}$, which holds for all
sufficiently large $N$, and $Hc_1\ge1$ follows from $c_1>\hat c_N$. We may therefore use \cref{lem:transfer}, which for such $N$ yields
\begin{equation}\label{eq:cdf_inversion_bound}
\sup_{x\in\R}|F_N(x)-F(x)| \le \frac1\pi\left[\frac{\varepsilon(H)}{2N+1}+\sqrt2\,\varepsilon(H)+2\sqrt2\,|\phi(iH)|\right].
\end{equation}

By Stirling's formula in \cref{eq:gauss_norm}, $\log h_N=-4N\log N+(4-6\log2)N+O_\nu(\log N)$, and hence
\[\varepsilon(\eta N^2) = O_{\nu,\eta}\!\left(N^{-2\nu-1}\left(\frac{e^4\eta^2}{64}\right)^N\right).\]
To estimate the exact transform at the cutoff, use
\[\phi(z)=\frac{(\sqrt{2z}/2)^\nu}{\Gamma(\nu+1)I_\nu(\sqrt{2z})}.\]
For $z=iu$ with $u>0$, the principal square root is $(1+i)\sqrt u$.
The large-argument asymptotic of $I_\nu$ on this ray \cite[\S7.23]{Watson} yields
\[|\phi(iu)|=O_\nu\!\left(u^{\nu/2+1/4}e^{-\sqrt u}\right)\quad\text{and}\quad |\phi(i\eta N^2)|=O_{\nu,\eta}\!\left(N^{\nu+1/2}e^{-\sqrt \eta\,N}\right).\]
It is now easy to finish the proof. We insert these estimates into \cref{eq:cdf_inversion_bound} and obtain, for every $\eta>0$,
\begin{equation}\label{eq:two_term}
\sup_{x\in\R}|F_N(x)-F(x)| = O_{\nu,\eta}\!\left(N^{-2\nu-1}\left(\frac{e^4\eta^2}{64}\right)^N + N^{\nu+1/2}e^{-\sqrt \eta\,N}\right).
\end{equation}
The two bases coincide when $e^4\eta^2/64=e^{-\sqrt \eta}$, which for $s=\sqrt \eta$ is the equation $s+4\log s=6\log2-4$ defining $s_*$.
Taking $\eta=s_*^2$, both exponential factors equal $q_*^N$, and the polynomial factors are absorbed into $q^N$ for every $q\in(q_*,1)$.
For a suitable choice of $C_{\nu,q}$, the bound extends to the finitely many remaining $N$, as required.
\end{proof}

\begin{remark}\label{rem:rate}
The proof establishes the two-term estimate \cref{eq:two_term} for every cutoff $H=\eta N^2$. The first term comes from the quadrature
remainder on the Fourier interval $0\le u\le\eta N^2$ and the second from the transform of $T$ at its endpoint.
The choice $\eta =1/4$ produces the bases $e^4/1024\approx0.053$ and $e^{-1/2}\approx0.607$, and $\eta=s_*^2\approx0.710$ balances them
at $q_*\approx0.430$, the smallest base this argument produces.
The observed errors in \cref{sec:numerics} fall by a factor near $0.06$ per unit increase in $N$.
The endpoint estimate, which uses the $e^{-\sqrt u}$ decay of the exact transform along the imaginary axis,
may contribute to the gap between the proved and observed rates.
The quadrature remainder bound and the absolute-value estimates in Fourier inversion may also contribute to this gap.
\end{remark}

\section{Further properties of the approximants}\label{sec:properties}

\subsection{Thorin measures and size biasing}\label{sec:thorin_bias}

The scale measure has a probabilistic interpretation through the independent increment associated with size biasing.
In presenting this interpretation, it is convenient to begin with its relation to the Thorin measure of the exit law.

Let us first recall that the Thorin measure of a nonnegative clock random variable $H$ with a generalized Gamma convolution law is the unique positive measure $U_H$ that,
in scale coordinates, represents its Laplace exponent as
\[-\log \E[e^{-\sigma H}] = d_H\sigma + \int_{(0,\infty)}\log(1+\sigma s)\,U_H(ds), \qquad \sigma\ge0,\]
where $d_H\ge0$ is a deterministic shift and $\int_{(0,\infty)}\log(1+s)\,U_H(ds)<\infty$.
An atom of mass $b$ at scale $c$ corresponds to an independent Gamma summand with shape $b$ and scale $c$.
The rate-coordinate Thorin measure is the pushforward of $U_H$ under $s\mapsto 1/s$.

Both $T$ and $\widehat T_N$ are generalized Gamma convolutions in the sense of Thorin \cite{Bondesson}, with zero deterministic shift.
The Thorin measure of $T$ has a unit atom at each scale $c_k$, corresponding to the rate $1/c_k=j_{\nu,k}^2/2$.
Thus, in scale coordinates, the Thorin measure is
\[U := \sum_{k\ge1}\delta_{c_k}, \qquad\text{with}\qquad \rho_\nu(ds)=s\,U(ds).\]
The measure $U$ has infinitely many unit atoms, and its scale-weighted version $\rho_\nu$ has finite mass because
\[\rho_\nu((0,\infty))=\sum_{k\ge1}c_k = \E[T]<\infty.\]
The corresponding measures for the approximant are
\[\widehat U_N := \sum_{i=1}^N\frac{\hat w_i}{\hat c_i}\delta_{\hat c_i},\qquad \widehat\rho_N(ds) := s\,\widehat U_N(ds) = \sum_{i=1}^N\hat w_i\delta_{\hat c_i}.\]
Thus \cref{prop:quadrature} constructs the finite Thorin measure $\widehat U_N$ by applying Gaussian quadrature to the scale-weighted
measure $\rho_\nu$ and dividing each quadrature weight by its node.
The interpretation of Gamma convolution approximation as a truncated moment problem for a transformed Thorin measure is also developed in \cite[Section~2.3]{LavernyEtAl}.

For a scaled sum of two independent three-dimensional Bessel exit times, Polson \cite[Section~5]{Polson} considers a related Thorin-measure
approximation that retains the leading atoms and replaces the remainder by a continuous density.

One says that $T^*$ has the size-biased law of $T$ when
\[\E[g(T^*)]=\frac{\E[Tg(T)]}{S_1}\]
for bounded measurable $g$. Differentiating \cref{eq:expectation_ct}, we find
\[\E[e^{-\lambda T^*}] = -\frac{\phi'(\lambda)}{S_1} = \phi(\lambda)\frac1{S_1}\int\frac{\rho_\nu(ds)}{1+\lambda s}, \qquad \lambda\ge0.\]
Let $C$ have distribution $\rho_\nu/S_1$, let $e$ be an independent exponential random variable of mean one, and take both independent of $T$.
Setting $M=Ce$, we obtain
\begin{equation}\label{eq:size_bias_increment}
T^*\overset d=T+M\quad\text{and}\quad \E[e^{-\lambda M}] = \frac1{S_1}\int\frac{\rho_\nu(ds)}{1+\lambda s}.
\end{equation}
In particular, $\rho_\nu/S_1$ is the distribution of the random scale in this exponential mixture.

Alternatively, the decomposition follows by size-biasing one summand in \cref{eq:ct}, chosen with probability proportional to its mean \cite[Section~2.4]{ArratiaGoldsteinKochman}.
Indeed, it is enough to observe that the summand of scale $c_k$ is chosen with probability $c_k/S_1$ and that its size-biased law is $\Gamma(2,c_k)$,
obtained by adding an independent exponential variable of scale $c_k$.
Choosing the summand independently of the original variables produces the increment $M$ above.

For a nonnegative random variable with finite positive mean, the existence of an independent nonnegative size-bias increment
characterizes infinite divisibility \cite[Theorem~51.1]{Sato}\cite[Theorem~11.1]{ArratiaGoldsteinKochman}.
Here the increment law can be identified from the L\'evy measure.
The exit law has zero deterministic drift and L\'evy measure
\[\Pi(dx)=\frac1x\int e^{-x/s}\,U(ds)\,dx, \qquad x>0,\]
as follows from its Gamma convolution representation
\cite{Bondesson}. Since $\int x\,\Pi(dx)=S_1$, the law of $M$ is the normalized size-biased L\'evy measure,
\begin{equation}\label{eq:size_bias_levy}
\frac{x\,\Pi(dx)}{S_1} = \left(\int\frac1s e^{-x/s}\,\frac{\rho_\nu(ds)}{S_1}\right)dx, \qquad x>0.
\end{equation}
Thus scale weighting of the Thorin measure determines the mixing law, while size weighting of the L\'evy measure determines the increment law.

The same construction for $\widehat T_N$ gives
\[\widehat T_N^*\overset d=\widehat T_N+\widehat M_N\quad\text{and}\quad \E[e^{-\lambda\widehat M_N}] = \frac1{S_1}\sum_{i=1}^N\frac{\hat w_i}{1+\lambda\hat c_i},\]
with $\widehat M_N$ independent of $\widehat T_N$ and its exponential scale drawn from $\widehat\rho_N/S_1$.
For every integer $m\ge0$,
\[\E[M^m]=\frac{m!}{S_1}\int s^m\,\rho_\nu(ds)\quad\text{and}\qquad\E[\widehat M_N^m] = \frac{m!}{S_1}\int s^m\,\widehat\rho_N(ds).\]
Consequently, from \cref{eq:cumulants} and the Gamma cumulant formula it can be deduced that
\[\kappa_n(T)=S_1\E[M^{n-1}]\quad\text{and}\quad \kappa_n(\widehat T_N)=S_1\E[\widehat M_N^{n-1}],\qquad n\ge1.\]
The foregoing establishes that the cumulant matching in \cref{thm:intro_match} is equivalent to
\[\E[\widehat M_N^m]=\E[M^m],\qquad 0\le m\le2N-1.\]
The Gaussian rule preserves these moments by discretizing the random scale of the size-bias increment.
In this sense, cumulant matching for the convolution is moment matching for an exponential mixture.

\subsection{Total Gamma shape and endpoint behaviour}

\begin{proposition}\label{prop:shape}
For every $\nu>-1$ and $N\ge1$,
\begin{equation}\label{eq:shape}
\Sigma_N:=\sum_{i=1}^{N}\frac{\hat w_i}{\hat c_i}=N(N+\nu+1);
\end{equation}
although $\int y^{-1}\,d\rho_\nu$ diverges, the Gauss rule applied to $y\mapsto y^{-1}$ takes the finite value $N(N+\nu+1)$.
\end{proposition}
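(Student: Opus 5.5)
The plan is to write the Gauss rule applied to $y\mapsto y^{-1}$ as a resolvent entry of the Jacobi matrix and then evaluate that entry with the recurrence of \cref{prop:jacobi}.

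\emph{Divergence.} This part is immediate. Each atom $c_k$ of $\rho_\nu$ has mass $c_k$, so
\[\int y^{-1}\,d\rho_\nu=\sum_k c_k^{-1}c_k=\sum_k 1=\infty.\]
The rule itself is nevertheless finite, because all nodes lie in $(0,c_1)$ by \cref{prop:nodes}.

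\emph{Reduction to determinants.} By \cref{prop:quadrature} and the spectral decomposition of $J_N$, which is invertible since its eigenvalues are positive,
\[\Sigma_N=\sum_i \hat w_i\hat c_i^{-1}=S_1\,e_1^\top J_N^{-1}e_1 .\]
Cramer's rule then gives
\[\Sigma_N=S_1\,\frac{E_N}{D_N},\qquad D_N=\det J_N,\qquad E_N=\det J_N^{(2:N)},\]
where $J_N^{(2:N)}$ is $J_N$ with its first row and column deleted, and $E_1:=1$. Expanding along the last row, both sequences satisfy
\[X_{n+1}=\alpha_nX_n-\gamma_nX_{n-1}.\]
Their starting values are:
\begin{itemize}
\item $D_0=1$ and $D_1=\alpha_0$;
\item $E_1=1$ and $E_2=\alpha_1$.
\end{itemize}

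\emph{Closed form for $D_N$.} Since $\det J_N=p_N(0)(-1)^N$ and $p_N(y)=P_{2N}(\sqrt y)$ from the folding in \cref{prop:jacobi}, evaluating the Lommel recurrence at $x=0$ gives $P_{m+1}(0)=-a_m^2P_{m-1}(0)$. Hence
\[D_N=\prod_{j=1}^N a_{2j-1}^2 .\]
Since $S_1=1/(2(\nu+1))$, the claim $\Sigma_N=N(N+\nu+1)$ is equivalent to
\[E_N=2(\nu+1)\,N(N+\nu+1)\,D_N .\]

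\emph{Induction.} The plan is to take the right-hand side as an ansatz and verify by induction that it obeys the same three-term recurrence and the same two initial values as $E_N$.

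For the base cases:
\begin{itemize}
\item At $N=1$, $E_1=1$ and $2(\nu+1)(\nu+2)\,a_1^2=1$.
\item At $N=2$, one must check that $\alpha_1=2(\nu+1)\cdot2(\nu+3)\,a_1^2a_3^2$. This is a short rational computation in $\nu$.
\end{itemize}

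For the inductive step, divide the recurrence by $D_{n}$ and use $D_{n+1}/D_n=a_{2n+1}^2$ and $D_{n-1}/D_n=a_{2n-1}^{-2}$. Together with $\gamma_n=a_{2n-1}^2a_{2n}^2$, the step reduces to the identity
\[(n+1)(n+\nu+2)\,a_{2n+1}^2=\alpha_n\,n(n+\nu+1)-a_{2n}^2\,(n-1)(n+\nu).\]
This must be checked using the explicit values $\alpha_n=a_{2n}^2+a_{2n+1}^2$ and $a_m^2=1/(2(\nu+m)(\nu+m+1))$. After collecting the $a_{2n+1}^2$ terms, it becomes
\[(2n+\nu+1)\,a_{2n+1}^2=(2n+\nu+1)\,a_{2n}^2 ,\]
which is false as written. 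This indicates that one side of the recurrence must be realigned, and it is the point where the bookkeeping has to be done carefully.

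\emph{Main obstacle.} The hard step is exactly this index bookkeeping. The $E$-recurrence runs on the matrix with $\alpha_1,\dots$ on its diagonal, so the correct step shifts one index relative to what I wrote above. If the direct verification keeps misaligning, I would switch to the Lommel side instead. For the symmetric $2N$-point rule $\{\pm x_i\}$ of $\Lom_{\nu,\sqrt2}$, the linear coefficient at $x=0$ of
\[\sum_i W_i\Bigl(\frac1{x-x_i}+\frac1{x+x_i}\Bigr)\]
equals $-2\sum_i W_i/x_i^2=-\Sigma_N$. This coefficient is the derivative at $0$ of the ratio of associated to ordinary Lommel polynomials, $\tilde P_{2N-1}(x)/P_{2N}(x)$, times the total mass. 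Using the recurrences $P_{m+1}=xP_m-a_m^2P_{m-1}$ and the associated one $\tilde P_{m+1}=x\tilde P_m-a_{m+1}^2\tilde P_{m-1}$ at $x=0$, and their first derivatives at $0$, one gets a one-step recurrence in $m$ that telescopes. A telescoping sum of the form $\sum_{j\le N}(2j+\nu)$ is what produces $N(N+\nu+1)$.
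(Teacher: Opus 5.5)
Your setup is correct all the way to the inductive identity, and that identity is true. The index misalignment you suspect does not exist. The cofactor matrix $J_N^{(2:N)}$ has diagonal $\alpha_1,\dots,\alpha_{N-1}$ and off-diagonal entries $\sqrt{\gamma_2},\dots,\sqrt{\gamma_{N-1}}$. Expanding along its last row therefore gives $E_{n+1}=\alpha_nE_n-\gamma_nE_{n-1}$ for $n\ge2$, with exactly the indices of the $D$-recurrence. Your reduction to
\[(n+1)(n+\nu+2)\,a_{2n+1}^2=\alpha_n\,n(n+\nu+1)-a_{2n}^2\,(n-1)(n+\nu)\]
is right; the slip is in collecting terms. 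Write $f(n)=2(\nu+1)n(n+\nu+1)$ and substitute $\alpha_n=a_{2n}^2+a_{2n+1}^2$. The identity is then equivalent to $a_{2n+1}^2\bigl(f(n+1)-f(n)\bigr)=a_{2n}^2\bigl(f(n)-f(n-1)\bigr)$. Since $f(n+1)-f(n)=2(\nu+1)(2n+\nu+2)$ and $f(n)-f(n-1)=2(\nu+1)(2n+\nu)$, it reads $(2n+\nu+2)\,a_{2n+1}^2=(2n+\nu)\,a_{2n}^2$, and both sides equal $1/\bigl(2(\nu+2n+1)\bigr)$. Together with your two correctly checked base cases, this completes the induction. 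As submitted, though, the proposal stops short of a proof. It declares its key step false and falls back on a Lommel-side computation that is only sketched. That sketch is also misnormalized: the $2N$-point rule of $\Lom_{\nu,\sqrt2}$ places weight $\hat w_i$ at each of $\pm\sqrt{\hat c_i}$, so the linear coefficient there is $-2\Sigma_N$, not $-\Sigma_N$.

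With the slip fixed, yours is a valid route that differs from the paper's. The paper observes that the folded coefficients give the factorization $J_N=B_NB_N^\top$. Here $B_N$ is lower bidiagonal with diagonal entries $a_{2j-1}$ and subdiagonal entries $a_{2j}$, so $\Sigma_N=S_1\|B_N^{-1}e_1\|^2$. Forward substitution gives the components of $B_N^{-1}e_1$ in closed form, and the ratios $a_{2m}^2/a_{2m-1}^2=(\nu+2m-1)/(\nu+2m+1)$ telescope. The value $\Sigma_N=\sum_{n=0}^{N-1}(\nu+2n+2)$ is thus derived rather than guessed, and the same factorization gives $\det J_N=\prod_j a_{2j-1}^2$ directly. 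Your Cramer--continuant argument uses only the three-term recurrence but needs the answer as an ansatz. The two are closer than they look. Your rewritten identity $a_{2n+1}^2\bigl(f(n+1)-f(n)\bigr)=a_{2n}^2\bigl(f(n)-f(n-1)\bigr)$ is exactly the square of the paper's substitution step $a_{2n}x_n+a_{2n+1}x_{n+1}=0$ for $B_Nx=e_1$. This holds because $f(n+1)-f(n)=(\Sigma_{n+1}-\Sigma_n)/S_1=x_{n+1}^2$.
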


\begin{proof}
Write $a_m>0$ for the positive square root of the coefficient $a_m^2$, and let $B_N$ be the lower bidiagonal matrix with
\[(B_N)_{jj}=a_{2j-1},\quad 1\le j\le N,\quad\text{and}\quad (B_N)_{j+1,j}=a_{2j},\quad 1\le j<N.\]
By the coefficients in \cref{prop:jacobi}, $J_N=B_NB_N^\top$.
By the spectral representation in
\cref{prop:quadrature},
\[\Sigma_N=S_1e_1^\top J_N^{-1}e_1 = S_1\|B_N^{-1}e_1\|^2.\]
Solving $B_Nx=e_1$ successively, we find
\[x_{n+1} = \frac{(-1)^n}{a_{2n+1}} \prod_{m=1}^n\frac{a_{2m}}{a_{2m-1}}, \qquad 0\le n<N,\]
with the empty product equal to $1$.
Since
\[\prod_{m=1}^n\frac{a_{2m}^2}{a_{2m-1}^2} = \prod_{m=1}^n\frac{\nu+2m-1}{\nu+2m+1} = \frac{\nu+1}{\nu+2n+1},\]
we obtain, making use of $a_{2n+1}^{-2}=2(\nu+2n+1)(\nu+2n+2)$ and $S_1=1/(2\nu+2)$,
\[\Sigma_N = S_1\sum_{n=0}^{N-1}\frac{1}{a_{2n+1}^2}\prod_{m=1}^n\frac{a_{2m}^2}{a_{2m-1}^2} = \sum_{n=0}^{N-1}(\nu+2n+2) = N(N+\nu+1)\]
as claimed.
\end{proof}

\begin{remark}\label{rem:shape}
$\Sigma_N$ is the total shape of the Gamma convolution $\widehat T_N$ and the total mass of its Thorin
measure in the rate variable, namely, $\sum_i(\hat w_i/\hat c_i)\,\delta_{1/\hat c_i}$.
Here the quadrature replaces the infinite-mass Thorin measure of $T$ of \cref{sec:thorin_bias} by one of mass $N(N+\nu+1)$.
Writing $b_i=\hat w_i/\hat c_i$ and $f_N$ for the density of $\widehat T_N$, convolution of the Gamma densities shows that
\[
f_N(t)\sim\frac{\prod_{i=1}^N\hat c_i^{-b_i}}{\Gamma(\Sigma_N)}\,t^{\Sigma_N-1},
\qquad t\downarrow0.
\]
Thus the exponent at the origin is $\Sigma_N-1=N(N+\nu+1)-1$, and
\[
F_N(t)\sim\frac{\prod_{i=1}^N\hat c_i^{-b_i}}{\Gamma(\Sigma_N+1)}\,t^{\Sigma_N},
\qquad t\downarrow0.
\]
By comparison, the exact CDF satisfies $\lim_{t\downarrow0}t\log F(t)=-1/2$.
Indeed, $Y_t/(2t)\sim\Gamma(\nu+1,1)$ \cite[Chapter~XI]{RevuzYor} and
$\{Y_t\ge1\}\subseteq\{T\le t\}$ furnish a lower bound of order
$t^{-\nu}e^{-1/(2t)}$ for $F(t)$; the upper bound
$F(t)\le e^{\lambda t}\phi(\lambda)$ at $\lambda=1/(2t^2)$ is of order
$t^{-\nu-1/2}e^{-1/(2t)}$ by the large-argument asymptotic of $I_\nu$
\cite[\S7.23]{Watson}. Consequently, $F_N(t)/F(t)\to\infty$ as $t\downarrow0$
for every fixed $N$.
\end{remark}

\begin{remark}\label{rem:tails}
Write $\bar F=1-F$ and $\bar F_N=1-F_N$. From the exponential representation
\cref{eq:ct},
\begin{equation}\label{eq:exact_tail}
\bar F(t)\sim A_\nu e^{-t/c_1},
\qquad
A_\nu:=\prod_{k\ge2}(1-c_k/c_1)^{-1}<\infty,
\qquad t\to\infty.
\end{equation}
To see this, write $T=c_1e_1+R$, where $R=\sum_{k\ge2}c_ke_k$ is independent of $e_1$.
For any $\tau\in(1/c_1,1/c_2)$ we have that $\E[e^{\tau R}]<\infty$, since $\sum_kc_k<\infty$.
Conditioning on $R$, we obtain
\[e^{t/c_1}\bar F(t) = \E[e^{R/c_1}\bigm| R\le t]+e^{t/c_1}\mathbb P(R>t)\longrightarrow \E[e^{R/c_1}]=A_\nu.\]
For the approximant, write $\widehat T_N=G_N+\widehat R_N$, with $G_N\sim\Gamma(b_N,\hat c_N)$ independent of the remaining factors.
Conditioning on $\widehat R_N$ and using the Gamma survival asymptotic, we obtain similarly
\begin{equation}\label{eq:gamma_tail}
\bar F_N(t)\sim B_Nt^{b_N-1}e^{-t/\hat c_N},\quad\text{with}\quad B_N:=\frac{\prod_{i<N}(1-\hat c_i/\hat c_N)^{-b_i}}
{\Gamma(b_N)\hat c_N^{\,b_N-1}}, \qquad t\to\infty.
\end{equation}
Empty products are interpreted as one. Since $\hat c_N<c_1$,
$\bar F_N(t)/\bar F(t)\to0$ for every fixed $N$.
Thus the relative error $|F_N(t)-F(t)|/\bar F(t)$ tends to one,
while the underestimation factor $\bar F(t)/\bar F_N(t)$ is unbounded.
Together with \cref{rem:shape}, this shows why a small Kolmogorov error alone
does not ensure relative accuracy for arbitrarily small tail probabilities or extreme quantiles.
Nonetheless, \cref{cor:relative_tail} establishes uniform relative accuracy of the upper tail on
$0\le t\le t_N$ whenever $t_N=o(r^{2N})$, with $r>1$ as in \cref{prop:extreme_rates}.
\end{remark}

\subsection{Monotonicity and negative moments}

For $\lambda>0$ write $g_\lambda(y)=\log(1+\lambda y)/y$, with $g_\lambda(0)=\lambda$, so that
$Q(\lambda)=\int g_\lambda\,d\rho_\nu$ and $Q_N(\lambda)$ is the $N$-point Gauss rule applied to $g_\lambda$;
by \cref{eq:expectation_ct,eq:phiN}, $\E[e^{-\lambda T}]=e^{-Q(\lambda)}$ and $\E[e^{-\lambda\widehat T_N}]=e^{-Q_N(\lambda)}$.
This allows us to compare the transforms of the approximants across $N$.

\begin{lemma}\label{lem:monotone}
For every $\lambda>0$,
\[Q_1(\lambda)\le Q_2(\lambda)\le\cdots\le Q(\lambda),\qquad Q_N(\lambda)\uparrow Q(\lambda),\]
so the transforms $\E[e^{-\lambda\widehat T_N}]$ decrease in $N$ to $\E[e^{-\lambda T}]$.
\end{lemma}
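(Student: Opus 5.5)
The plan is to work with the derivatives $Q_N'$ and $Q'$ on $[0,\lambda]$, prove monotonicity there pointwise, and then integrate from $0$, using $Q_N(0)=Q(0)=0$. By \cref{eq:jacobi_quadrature} with $f(y)=(1+ty)^{-1}$, the spectral representation from \cref{prop:quadrature} gives, for $t\ge0$,
\[
Q_N'(t)=\sum_{i=1}^N\frac{\hat w_i}{1+t\hat c_i}=S_1\,e_1^\top(I_N+tJ_N)^{-1}e_1 .
\]
This holds because $J_N$ has eigenvalues $\hat c_i\ge0$ with first eigenvector components $\sqrt{\hat w_i/S_1}$, so $M_N:=I_N+tJ_N$ is symmetric positive definite.

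The key step is monotonicity in $N$ of the quantity $e_1^\top M_N^{-1}e_1$. For a symmetric positive definite $M$ one has the variational identity
\[
e_1^\top M^{-1}e_1=\sup_{x}\bigl(2x_1-x^\top Mx\bigr),
\]
with the supremum attained at $x=M^{-1}e_1$. The matrix $J_N$ is the leading $N\times N$ principal block of $J_{N+1}$, so $M_N$ is the leading block of $M_{N+1}$. Restricting the supremum for $M_{N+1}$ to vectors whose last coordinate is zero gives exactly the supremum for $M_N$. Hence $Q_N'(t)\le Q_{N+1}'(t)$ for every $t\ge0$.

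The upper bound $Q_N'(t)\le Q'(t)$ is already contained in the nonnegativity of the remainder in \cref{lem:gauss_remainder}. Integrating both inequalities over $[0,\lambda]$ yields
\[
Q_1(\lambda)\le Q_2(\lambda)\le\cdots\le Q(\lambda).
\]

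For convergence, \cref{lem:gauss_remainder} gives $0\le Q(\lambda)-Q_N(\lambda)\le h_N\lambda^{2N+1}/(2N+1)$. By the closed form \cref{eq:gauss_norm}, $h_N$ decays like $\exp(-4N\log N+O(N))$, so this bound tends to $0$ for each fixed $\lambda$. Thus $Q_N(\lambda)\uparrow Q(\lambda)$. Since $\E[e^{-\lambda\widehat T_N}]=e^{-Q_N(\lambda)}$ and $\E[e^{-\lambda T}]=e^{-Q(\lambda)}$, the transforms decrease in $N$ to $\E[e^{-\lambda T}]$.

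The only step that is not bookkeeping is the monotonicity $Q_N'\le Q_{N+1}'$. A direct approach through the remainder \cref{eq:stieltjes_remainder} would require comparing the remainders for consecutive $N$, which is awkward. The nesting of the Jacobi matrices, combined with the variational formula for the $(1,1)$ entry of the inverse, avoids that comparison. Two points need checking: that the variational formula is applied to a positive definite matrix, which holds since $t\ge0$ and $J_N$ is positive definite with nodes in $(0,c_1)$; and that the factor $S_1$ is the same for all $N$.
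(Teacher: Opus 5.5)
Your proof is correct and is essentially the paper's argument. The paper writes $Q_N(\lambda)=\int_{1/\lambda}^\infty G_N(w)\,dw/w$ with $G_N(w)=S_1\bigl[(J_N+wI)^{-1}\bigr]_{11}$, which is your $\int_0^\lambda S_1e_1^\top(I+tJ_N)^{-1}e_1\,dt$ after the substitution $w=1/t$; it then uses the same variational formula on nested principal blocks, with convergence from the remainder bound of \cref{lem:gauss_remainder}, and the one small citation point is that \cref{eq:jacobi_quadrature} is stated for polynomials, whereas the identity $S_1e_1^\top f(J_N)e_1=\sum_i\hat w_if(\hat c_i)$ you actually use holds for any $f$ by the spectral decomposition, as your own justification shows.
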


\begin{proof}
The identity $\int_{1/\lambda}^{\infty}\bigl(w(w+y)\bigr)^{-1}dw=\log(1+\lambda y)/y$ represents both sums as mixtures of resolvents,
\[Q_N(\lambda)=\int_{1/\lambda}^{\infty}G_N(w)\,\frac{dw}{w}\quad\text{and}\quad Q(\lambda)=\int_{1/\lambda}^{\infty}G(w)\,\frac{dw}{w},\]
with $G_N(w)=\sum_i\hat w_i/(w+\hat c_i)$ and $G(w)=\int(w+y)^{-1}d\rho_\nu(y)$.
The mixing measure $dw/w$ is positive and does not depend on $N$, so for the monotonicity $Q_N(\lambda)\le Q_{N+1}(\lambda)$ it suffices to show that $G_N(w)$ is nondecreasing in $N$ for each $w>0$.
By the spectral decomposition of \cref{prop:quadrature}, $G_N(w)=S_1\bigl[(J_N+wI)^{-1}\bigr]_{11}$, with $J_N$ the
symmetric matrix of \cref{eq:matrices}. Here we may invoke the variational formula
$(A^{-1})_{11}=\max_x\bigl(2x_1-x^\top Ax\bigr)$ for positive definite $A$.
Thanks to \cref{prop:nodes}, the eigenvalues of $J_{N+1}$, the nodes of the $(N+1)$-point rule, are positive,
so $J_{N+1}+wI$ is positive definite. Its leading principal block is $J_N+wI$,
and restricting the maximum to vectors with last coordinate zero shows that
$\bigl[(J_{N+1}+wI)^{-1}\bigr]_{11}\ge\bigl[(J_N+wI)^{-1}\bigr]_{11}$; multiplying by $S_1$, we obtain $G_{N+1}(w)\ge G_N(w)$.
On the other hand, by the final bound of \cref{lem:gauss_remainder} and \cref{eq:gauss_norm}, $Q_N(\lambda)\to Q(\lambda)$ for every $\lambda>0$.
Thus $Q_N(\lambda)\uparrow Q(\lambda)$, and the asserted monotonicity of the Laplace transforms follows by exponentiation.
\end{proof}

Since negative moments are integrals of the Laplace transform against a power of $\lambda$, \cref{lem:monotone} has the following consequence for them.

\begin{proposition}\label{prop:negmom}
For every $q>0$ the negative moments $\E[\widehat T_N^{-q}]\in(0,\infty]$ are nonincreasing in $N$,
finite precisely when $q<\Sigma_N$, and
\[\lim_{N\to\infty}\E[\widehat T_N^{-q}] = \E[T^{-q}]<\infty.\]
\end{proposition}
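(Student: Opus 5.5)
The plan is to start from the Gamma-function identity
\[x^{-q}=\frac{1}{\Gamma(q)}\int_0^\infty \lambda^{q-1}e^{-\lambda x}\,d\lambda,\qquad x>0,\ q>0.\]
Applying it with $x=\widehat T_N$ and $x=T$, Tonelli's theorem (everything is nonnegative) gives
\[\E[\widehat T_N^{-q}]=\frac{1}{\Gamma(q)}\int_0^\infty\lambda^{q-1}\phi_N(\lambda)\,d\lambda,\qquad \E[T^{-q}]=\frac{1}{\Gamma(q)}\int_0^\infty\lambda^{q-1}\phi(\lambda)\,d\lambda,\]
with values in $(0,\infty]$. Positivity is automatic. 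By \cref{lem:monotone}, $\phi_N(\lambda)$ is nonincreasing in $N$ for each $\lambda>0$, so the integrands decrease pointwise and the negative moments are nonincreasing in $N$.

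For the finiteness criterion I would use the explicit product \cref{eq:phiN}. Near $\lambda=0$ the integrand is bounded by $\lambda^{q-1}$, which is integrable since $q>0$. As $\lambda\to\infty$,
\[\phi_N(\lambda)\sim\prod_i\hat c_i^{-b_i}\,\lambda^{-\Sigma_N},\qquad b_i=\hat w_i/\hat c_i,\]
with $\Sigma_N=N(N+\nu+1)$ by \cref{prop:shape}; the constant is positive because the nodes are positive (\cref{prop:nodes}). Hence $\lambda^{q-1}\phi_N(\lambda)\asymp\lambda^{q-1-\Sigma_N}$ at infinity, and this is integrable exactly when $q<\Sigma_N$. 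So $\E[\widehat T_N^{-q}]<\infty$ if and only if $q<\Sigma_N$.

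Finiteness of $\E[T^{-q}]$ comes from the representation $\phi(\lambda)=\Gamma(\nu+1)(\lambda/2)^{\nu/2}/I_\nu(\sqrt{2\lambda})$, obtained from \cref{eq:expectation_ct} and used in the proof of \cref{thm:intro_convergence}(ii). The large-argument asymptotic of $I_\nu$ gives $\phi(\lambda)=O(\lambda^{\nu/2+1/4}e^{-\sqrt{2\lambda}})$, so $\lambda^{q-1}\phi(\lambda)$ is integrable at infinity, and it is again integrable at zero. Alternatively, one could use $F(t)\le e^{\lambda t}\phi(\lambda)$ from \cref{rem:shape}, which shows that $F$ vanishes faster than any power at $0$.

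For the limit, fix $N_0$ with $\Sigma_{N_0}>q$; this is possible since $\Sigma_N\to\infty$. For $N\ge N_0$ the integrands $\lambda^{q-1}\phi_N(\lambda)$ are dominated by the integrable function $\lambda^{q-1}\phi_{N_0}(\lambda)$, by monotonicity. They converge pointwise to $\lambda^{q-1}\phi(\lambda)$, because $Q_N(\lambda)\to Q(\lambda)$ by \cref{lem:monotone}. Dominated (or monotone) convergence then yields $\E[\widehat T_N^{-q}]\downarrow\E[T^{-q}]$. The only delicate point is the behaviour of $\phi_N$ at infinity. This needs the positivity of all the nodes and the exact total shape from \cref{prop:shape}, and both are already available, so I expect no real obstacle.
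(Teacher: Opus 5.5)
Your proof is correct and follows essentially the same route as the paper: the Tonelli identity, pointwise monotonicity of $\phi_N$ from \cref{lem:monotone}, the behaviour of $\phi_N$ at infinity, and dominated convergence from an index $N_0$ with $\Sigma_{N_0}>q$; where you use the asymptotic equivalence $\phi_N(\lambda)\sim\prod_i\hat c_i^{-b_i}\lambda^{-\Sigma_N}$, the paper uses the two-sided bounds $(1+\lambda\hat c_N)^{-\Sigma_N}\le\phi_N(\lambda)\le(1+\lambda\hat c_1)^{-\Sigma_N}$. Your separate Bessel-asymptotic argument for $\E[T^{-q}]<\infty$ is not needed, since $\phi\le\phi_{N_0}$ already gives $\E[T^{-q}]\le\E[\widehat T_{N_0}^{-q}]<\infty$; also, in your formula for $\phi(\lambda)$ the factor $\Gamma(\nu+1)$ belongs in the denominator, which does not affect the argument.
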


\begin{proof}
For a positive random variable $X$ and $q>0$, by Tonelli's theorem we have
\[\Gamma(q)\,\E[X^{-q}]=\int_0^\infty\lambda^{q-1}\,\E[e^{-\lambda X}]\,d\lambda,\]
with both sides in $[0,\infty]$.
With $X=\widehat T_N$ the integrand is $\lambda^{q-1}e^{-Q_N(\lambda)}$.
Note then that $Q_N(\lambda)\ge\Sigma_N\log(1+\lambda\hat c_1)$, since $\hat c_1$ is the smallest node, so the integral is finite for $q<\Sigma_N$.
On the other hand, $e^{-Q_N(\lambda)}\ge(1+\lambda\hat c_N)^{-\Sigma_N}$, which shows divergence for $q\ge\Sigma_N$. By
\cref{lem:monotone} the integrands are nonincreasing in $N$ and decrease pointwise to
$\lambda^{q-1}e^{-Q(\lambda)}$, so the moments are nonincreasing. Fix $N_0$ with $\Sigma_{N_0}>q$.
The integrand at $N_0$ is integrable and dominates those for $N\ge N_0$.
Dominated convergence therefore gives
\[\E[\widehat T_N^{-q}]\longrightarrow \E[T^{-q}]<\infty.\]
\end{proof}

\begin{remark}\label{rem:fredholm}
It is worthwhile to mention an operator interpretation of \cref{lem:monotone}.
One denotes by $J$ the Jacobi operator on $\ell^2$ with diagonal $(\alpha_n)$ and off-diagonal $(\sqrt{\gamma_n})$,
the semi-infinite extension of $J_N$ in \cref{eq:matrices}, compact and selfadjoint with simple eigenvalues $c_k$ and spectral measure $\rho_\nu/S_1$ at $e_1$.
Let $G_\nu$ be the Green operator of the Bessel process killed at $1$, the inverse of the selfadjoint realization of
\[-\tfrac12\bigl(\partial_x^2+(2\nu+1)x^{-1}\partial_x\bigr)\quad\text{on}\quad L^2\bigl((0,1),x^{2\nu+1}dx\bigr)\]
with the Dirichlet condition $f(1)=0$ and, for $-1<\nu<1$, the zero-flux boundary condition
\[\lim_{x\downarrow0}x^{2\nu+1}f'(x)=0.\]
For $\nu\ge1$ the origin is a limit-point endpoint.
The eigenfunctions of $G_\nu$ are $x^{-\nu}J_\nu(j_{\nu,k}x)$ with simple eigenvalues $c_k$, so $G_\nu$ is compact and selfadjoint.
The common simple eigenvalue sequence makes $J$ and $G_\nu$ unitarily equivalent.
\Cref{eq:expectation_ct} is the Fredholm determinant identity
\[\E[e^{-\lambda T}]=\det(I+\lambda G_\nu)^{-1},\]
the operator being trace class with $\operatorname{Tr}G_\nu=S_1$.
In this identity the Thorin measure $U=\sum_{k\ge1}\delta_{c_k}$ of \cref{sec:thorin_bias} is the eigenvalue counting measure of $G_\nu$,
an instance of the connection between the canonical measure of a diffusion hitting time
and the spectral measure of its generator developed in \cite{Kent1982}.
The scale-weighted spectral measure converts the trace into a matrix element,
\[Q(\lambda)=S_1\,\langle e_1,\,g_\lambda(J)\,e_1\rangle\quad\text{and}\quad Q_N(\lambda)=S_1\,\langle e_1,\,g_\lambda(J_N)\,e_1\rangle,\]
and the second expression evaluates $g_\lambda$ on the compression of $J$ to the order-$N$ Krylov subspace of $e_1$.
The resulting transform $e^{-Q_N}$ is a weighted spectral determinant, in which the quadrature assigns Gamma multiplicity $\hat w_i/\hat c_i$ to each effective
scale $\hat c_i$. The result is a Gauss approximation of the reciprocal Fredholm determinant through its scale-weighted spectral measure.
\end{remark}

\section{Numerical experiments}\label{sec:numerics}

The nodes and weights are constructed from the rational Jacobi data \cref{eq:fold}, and the exact power sums \cref{eq:cumulants} provide checks.
The construction requires no Bessel zeros or Bessel function values.
The symmetric tridiagonal matrix $J_N$ of \cref{eq:matrices} was diagonalized at $45$ decimal digits by the implicit QL algorithm,
and the weights were obtained from \cref{prop:quadrature}.

Cumulant matching in \cref{thm:intro_match} was verified in exact rational arithmetic.
The quantity compared is the normalized cumulant
\[\widehat S_n := \frac{\kappa_n(\widehat T_N)}{(n-1)!} = \sum_{i=1}^N\hat w_i\hat c_i^{\,n-1} = S_1\bigl(H_N^{\,n-1}\bigr)_{11},\]
where the last equality holds for every $n\ge1$ by the spectral decomposition of $J_N$ in \cref{prop:quadrature} and the diagonal
conjugation carrying $H_N$ to $J_N$.
For the indices $\nu=\tfrac12$, $0$, and $1.4=\tfrac75$ used below, the entries of $H_N$, the power sums $S_n=2^n\sigma_n$ computed by
Kishore's recursion, and the squared norm $h_N$ of \cref{eq:gauss_norm} are rational numbers. Thus the identities
\[\widehat S_n=S_n,\quad 1\le n\le2N,\quad\text{and}\quad \widehat S_{2N+1}-S_{2N+1}=-h_N \]
can be tested without rounding; both were verified for $1\le N\le12$.
In terms of the raw cumulants, the second identity reads
\[\kappa_{2N+1}(\widehat T_N)-\kappa_{2N+1}(T)=-(2N)!\,h_N.\]
The sums formed from the floating-point nodes and weights reproduce
the exact $\widehat S_n$ for $1\le n\le2N+1$ with relative errors below $3\times10^{-45}$.
At $\nu=0$ and $N=12$, direct floating-point subtraction cannot reliably resolve the mismatch at this precision, since
$h_{12}/S_{25}\approx1.05\times10^{-45}$ places it at the last retained digit of the two power sums.

Write $D_N(t)=F_N(t)-F(t)$. We evaluated $D_N$ by Talbot inversion of the difference transform $(\phi_N(z)-\phi(z))/z$,
thereby avoiding the final subtraction of two separately inverted distribution functions.
Each reported extremal value was recomputed with a higher Talbot degree and with the de Hoog algorithm, and the three evaluations
agree to at least twelve significant digits.
At seven grid points spread across the interval, raising the Talbot degree from $50$ to $70$ changed the computed values by at most $3\times10^{-31}$, providing an empirical check on inversion accuracy.
At $\nu=\tfrac12$, inversion of $\phi(z)/z$ was also compared with the theta series for $F$, with agreement to better than $10^{-34}$.

We searched adaptively for extrema of $|D_N|$ on the interval $t_-\le t\le t_+$, where $t_-=10^{-3}\,\E[T]$ and $t_+=60\,\E[T]$.
A logarithmic grid of $320$ points ($640$ for $N\ge8$) was placed on the interval, and each candidate local maximum identified on the
grid was refined by golden-section search.
Let $E_N^{\mathrm{num}}$ denote the largest refined value found, and let $t_*$ be its location.

Outside the search interval, monotonicity of $F$ and $F_N$ gives
\begin{equation}\label{eq:endpoint_bounds}
\begin{aligned}
\sup_{0<t\le t_-}|D_N(t)| &\le\max\{F_N(t_-),F(t_-)\},\\
\sup_{t\ge t_+}|D_N(t)| &\le\max\{1-F_N(t_+),1-F(t_+)\}.
\end{aligned}
\end{equation}
The four probabilities were bounded using the Chernoff inequalities
\begin{equation}\label{eq:chernoff}
F_X(t)\le e^{\lambda t}\,\E\bigl[e^{-\lambda X}\bigr]\quad\text{and}\quad
1-F_X(t)\le e^{-\lambda t}\,\E\bigl[e^{\lambda X}\bigr],
\end{equation}
valid for $\lambda>0$, with the second requiring $\lambda<1/c_1$ when $X=T$ and $\lambda<1/\hat c_N$ when $X=\widehat T_N$.
The exponential moments are $\phi(-\lambda)=1/{}_0F_1(;\nu+1;-\lambda/2)$ from
\cref{eq:expectation_ct} and the product \cref{eq:phiN} evaluated at $-\lambda$, respectively.
Each bound was minimized numerically over its admissible range.
For each pair $(\nu,N)$ in \cref{tab:sweep}, the larger of the two endpoint bounds is less than $10^{-11}E_N^{\mathrm{num}}$.
Consequently, any value of $|D_N|$ exceeding $E_N^{\mathrm{num}}$ must lie inside the search interval.
The table reports the largest extremum resolved by the interior search.

\begin{table}[ht]
\caption{Largest resolved extremum $E_N^{\mathrm{num}}$ of $|F_N(t)-F(t)|$ on $10^{-3}\,\E[T]\le t\le60\,\E[T]$,
and its location $t_*$ in units of $\E[T]=1/(2\nu+2)$.
For each parameter pair, the endpoint bounds in \cref{eq:endpoint_bounds} are smaller than the corresponding reported
extremum by at least eleven orders of magnitude.}
\label{tab:sweep}
\begin{tabular}{rcccccc}
\hline
 & \multicolumn{2}{c}{$\nu=1/2$ ($d=3$)}
 & \multicolumn{2}{c}{$\nu=0$ ($d=2$)}
 & \multicolumn{2}{c}{$\nu=1.4$ ($d=4.8$)}\\
$N$
 & $E_N^{\mathrm{num}}$ & $t_*/\E[T]$
 & $E_N^{\mathrm{num}}$ & $t_*/\E[T]$
 & $E_N^{\mathrm{num}}$ & $t_*/\E[T]$\\
\hline
 2 & $3.9\times10^{-3}$ & $0.52$
   & $3.3\times10^{-3}$ & $0.19$
   & $4.4\times10^{-3}$ & $0.60$\\
 4 & $1.8\times10^{-5}$ & $0.24$
   & $1.4\times10^{-5}$ & $0.18$
   & $2.3\times10^{-5}$ & $0.31$\\
 6 & $6.2\times10^{-8}$ & $0.21$
   & $3.8\times10^{-8}$ & $0.16$
   & $9.9\times10^{-8}$ & $0.29$\\
 8 & $2.1\times10^{-10}$ & $0.14$
   & $1.2\times10^{-10}$ & $0.10$
   & $3.7\times10^{-10}$ & $0.20$\\
10 & $7.1\times10^{-13}$ & $0.13$
   & $3.7\times10^{-13}$ & $0.095$
   & $1.5\times10^{-12}$ & $0.19$\\
12 & $2.2\times10^{-15}$ & $0.12$
   & $1.1\times10^{-15}$ & $0.072$
   & $5.5\times10^{-15}$ & $0.18$\\
\hline
\end{tabular}
\end{table}

At $\nu=\tfrac12$, $E_N^{\mathrm{num}}$ falls by a factor near $0.06$ per unit increase in $N$, as noted in \cref{rem:rate}.
Its location moves toward the origin as $N$ grows, from $0.52\,\E[T]$ at $N=2$ to $0.12\,\E[T]$ at $N=12$.
At $N=12$, values of $|D_N|$ exceeding $10^{-29}$ were resolved only for $0.02\,\E[T]<t<1.7\,\E[T]$.
At the mean, the signed errors $D_{12}(\E[T])$ are approximately $-1.4\times10^{-24}$, $7.0\times10^{-28}$, and $7.5\times10^{-22}$
for $\nu=\tfrac12,0,1.4$, respectively.

Moment matching also forces oscillation in the error curve.
By integration by parts, $\int_0^\infty t^jD_N(t)\,dt=0$ for $0\le j\le2N-1$,
so $D_N$ changes sign at least $2N$ times.
The numerical search resolved $2N$ sign changes for $N\le6$.
For larger $N$, some of the corresponding lobes have computed heights below $10^{-29}$.
These oscillations motivate the logarithmic grid and local refinement used in the search.

For comparison, the mean-corrected truncation
\[\sum_{k=1}^{N}c_ke_k+\Bigl(S_1-\sum_{k=1}^{N}c_k\Bigr)\]
at $\nu=\tfrac12$, examined by the same search procedure, has largest resolved errors
$2.9\times10^{-3}$, $9.4\times10^{-4}$, $4.2\times10^{-4}$,
$2.2\times10^{-4}$, and $1.3\times10^{-4}$ for $N=4,6,8,10,12$, respectively.
These values are consistent with a decay near $N^{-2.8}$ over the tested range.
At $N=12$, the largest resolved error is more than ten orders of magnitude larger than that of the Gauss construction.

For the same index $\nu=\tfrac12$, the largest quadrature node approaches $c_1=2/\sjz^2$ from below.
The differences $c_1-\hat c_N^{(N)}$ are approximately $1.5\times10^{-3}$, $9.4\times10^{-9}$, $1.4\times10^{-15}$,
and $1.7\times10^{-23}$ at $N=2,4,6,8$, respectively, and $3.3\times10^{-32}$ at $N=10$.
Thus $\sqrt{2/\hat c_{10}^{(10)}}$ reproduces $j_{1/2,1}=\pi$ to thirty digits as a byproduct.

\appendix
\section{Extreme parameters and relative tail accuracy}\label{app:nodes}

\Cref{prop:nodes} provides the node bounds used in the main proofs.
The estimates below describe how the largest quadrature node and its weight recover the isolated atom at $c_1$,
and how this recovery controls relative errors in the upper-tail probabilities.

\begin{proposition}
\label{prop:extreme_rates}
Fix $\nu>-1$, and set
\[u=\frac{2c_1-c_2}{c_2}\quad\text{and}\quad r=u+\sqrt{u^2-1}>1.\]
There are a constant $C_\nu>0$ and an integer $N_\nu\ge1$, depending only on $\nu$, such that
\begin{equation}\label{eq:noderate}
\begin{aligned}
0<c_1-\hat c_N&\le4S_1r^{-2(N-1)}, &&N\ge1,\\
0\le\hat w_N-c_1&\le C_\nu N r^{-2N}, &&N\ge N_\nu.
\end{aligned}
\end{equation}
Thus the largest node and its weight converge geometrically to the location and mass of the atom $\rho_\nu(\{c_1\})=c_1$.
\end{proposition}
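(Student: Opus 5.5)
The plan is to realize both estimates in \cref{eq:noderate} as extremal problems over polynomials of degree at most $N-1$, and then to insert a single test polynomial: a Chebyshev polynomial adapted to the gap between $c_2$ and the isolated atom $c_1$. Let $x(y)=(2y-c_2)/c_2$. This maps $[0,c_2]$ onto $[-1,1]$ and sends $c_1$ to $u>1$. Put
\[
q(y)=T_{N-1}(x(y)),
\]
where $T_{N-1}$ is the Chebyshev polynomial of the first kind. Then $|q|\le1$ on the part of the support of $\rho_\nu$ other than $c_1$. Moreover $q(c_1)=T_{N-1}(u)=\tfrac12(r^{N-1}+r^{-(N-1)})\ge \tfrac12 r^{N-1}$.

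\emph{Node estimate.} The left inequality $c_1-\hat c_N>0$ is already in \cref{prop:nodes}. For the upper bound, I use that $\hat c_N$ is the largest eigenvalue of $J_N$ (\cref{prop:quadrature}). Equivalently, by Gaussian exactness on degree $\le 2N-1$, it is the maximum of the Rayleigh quotient
\[
\frac{\int y\,q(y)^2\,d\rho_\nu(y)}{\int q(y)^2\,d\rho_\nu(y)}
\]
over nonzero $q$ with $\deg q\le N-1$. (This is the same quotient appearing in the proof of \cref{prop:nodes}.) With the Chebyshev choice of $q$, the following hold:
\begin{itemize}
\item $c_1-\hat c_N\le \int(c_1-y)q^2\,d\rho_\nu\big/\int q^2\,d\rho_\nu$.
\item The atom at $c_1$ contributes nothing to the numerator.
\item The remaining atoms satisfy $0\le c_1-y\le c_1$ and $q^2\le1$, with total mass at most $S_1$.
\item The denominator is at least $c_1\,q(c_1)^2$.
\end{itemize}
Together these give
\[
c_1-\hat c_N\le \frac{S_1}{T_{N-1}(u)^2}\le 4S_1r^{-2(N-1)},
\]
which holds for every $N\ge1$.

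\emph{Weight estimate.} The lower bound $\hat w_N\ge c_1$ is in \cref{prop:nodes}. For the upper bound I will use the Christoffel characterization
\[
\hat w_N=\min\Bigl\{\int q^2\,d\rho_\nu:\ \deg q\le N-1,\ q(\hat c_N)=1\Bigr\},
\]
which follows from Gaussian exactness exactly as the identity $\hat w_N=\int\ell_N^2\,d\rho_\nu$ does. I test it with $q(y)=T_{N-1}(x(y))/T_{N-1}(\hat u)$, where $\hat u=x(\hat c_N)$. This gives
\[
\hat w_N\le c_1\Bigl(\frac{T_{N-1}(u)}{T_{N-1}(\hat u)}\Bigr)^2+\frac{S_1}{T_{N-1}(\hat u)^2}.
\]
By the node estimate, $u-\hat u=2(c_1-\hat c_N)/c_2=O(r^{-2N})$. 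So for $N\ge N_\nu$ we have $\hat u\ge u_0>1$ with $u_0$ fixed, and hence $T_{N-1}(\hat u)\gtrsim r^{N}$ with $\nu$-dependent constants. The second term is therefore $O(r^{-2N})$.

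\emph{Main obstacle.} The hardest step is controlling the first ratio, because $\hat u$ depends on $N$ and the estimate must be uniform. I would write $T_{N-1}(\cosh\theta)=\cosh((N-1)\theta)$. On $[u_0,u]$ the logarithmic derivative satisfies
\[
\frac{d}{dx}\log T_{N-1}(x)=\frac{(N-1)\tanh((N-1)\theta)}{\sinh\theta}\le\frac{N}{\sqrt{u_0^2-1}}.
\]
Integrating, $\log\bigl(T_{N-1}(u)/T_{N-1}(\hat u)\bigr)\le CN(u-\hat u)=O(Nr^{-2N})$. Squaring and combining with the second term yields $\hat w_N-c_1\le C_\nu Nr^{-2N}$.
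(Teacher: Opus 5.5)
Your proposal is correct and is essentially the paper's proof: it uses the same Chebyshev test polynomial $T_{N-1}\bigl((2y-c_2)/c_2\bigr)$ and the same exactness argument for the node (your Rayleigh-quotient bound is the paper's inequality $(c_1-\hat c_N)\sum_i\hat w_i\pi_N(\hat c_i)^2\le\sum_i\hat w_i(c_1-\hat c_i)\pi_N(\hat c_i)^2$), and it uses the same bound $\hat w_N\le\int q^2\,d\rho_\nu$ for $q(\hat c_N)=1$ together with the logarithmic-derivative estimate for $T_{N-1}$. One small slip: $\hat u\ge u_0$ alone only gives $T_{N-1}(\hat u)\gtrsim r_0^{N}$ with $r_0=u_0+\sqrt{u_0^2-1}<r$, so you should derive $T_{N-1}(\hat u)\gtrsim r^N$ from your ratio estimate, or avoid that step as the paper does by writing $\hat w_N\le\bigl(c_1+S_1/T_{N-1}(u)^2\bigr)\bigl(T_{N-1}(u)/T_{N-1}(\hat u)\bigr)^2$.
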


\begin{proof}
Let $T_m$ denote the Chebyshev polynomial of the first kind, and set
\[\varphi(y)=\frac{2y-c_2}{c_2}\quad\text{and}\quad \pi_N(y)=\frac{T_{N-1}(\varphi(y))}{T_{N-1}(u)}.\]
Observe that $\pi_N(c_1)=1$. Since $|T_{N-1}|\le1$ on $[-1,1]$ and $T_{N-1}(u)\ge\tfrac12r^{N-1}$,
\[\sup_{0\le y\le c_2}|\pi_N(y)|\le2r^{-(N-1)}.\]
We apply the Gauss rule to $\pi_N^2$ and to $(c_1-y)\pi_N(y)^2$, for both of which it is exact, their degrees being at most $2N-2$ and $2N-1$.
The atom at $c_1$ contributes at least $c_1$ to the integral, so
\[\sum_{i=1}^N\hat w_i\pi_N(\hat c_i)^2 = \int\pi_N(y)^2\,d\rho_\nu(y)\ge c_1.\]
Since $c_1-\hat c_i\ge c_1-\hat c_N$ for every $i$, it follows that
\[
\begin{aligned}
(c_1-\hat c_N)c_1 &\le\sum_{i=1}^N\hat w_i(c_1-\hat c_i)\pi_N(\hat c_i)^2\\
&=\sum_{k\ge2}(c_1-c_k)c_k\pi_N(c_k)^2\\
&\le 4c_1S_1r^{-2(N-1)}.
\end{aligned}
\]
Dividing by $c_1$ proves the node estimate.

For the weight estimate, \cref{prop:nodes} supplies $\hat w_N\ge c_1$.
The node estimate supplies $N_\nu$ such that $\hat c_N\ge(c_1+c_2)/2$ for $N\ge N_\nu$.
For such $N$, by exactness and positivity,
\begin{equation}\label{eq:wexact}
\hat w_N\pi_N(\hat c_N)^2 \le \int\pi_N(y)^2\,d\rho_\nu(y) \le c_1+4S_1r^{-2(N-1)}.
\end{equation}
We will show that $\pi_N(\hat c_N)^{-2}\le1+C_\nu'Nr^{-2N}$ for all sufficiently large $N$, which combined with
\cref{eq:wexact} proves the weight estimate after enlarging $C_\nu$ and $N_\nu$ if necessary.
To do this, we put $\bar u=c_1/c_2>1$, so that $\varphi(\hat c_N)\ge\bar u$.
For $v>1$,
\[
T_{N-1}(v)=\cosh\bigl((N-1)\operatorname{arccosh}v\bigr),
\qquad
0\le(\log T_{N-1})'(v)\le\frac{N-1}{\sqrt{v^2-1}}.
\]
Consequently,
\begin{equation}\label{eq:logratio}
\begin{aligned}
0\le\log\frac{1}{\pi_N(\hat c_N)}&\le\frac{2(N-1)(c_1-\hat c_N)}{c_2\sqrt{\bar u^2-1}}\\
&\le\frac{8S_1(N-1)}{c_2\sqrt{\bar u^2-1}} r^{-2(N-1)}.
\end{aligned}
\end{equation}
The right-hand side is $O_\nu(Nr^{-2N})$ and tends to zero, exponentiation proves the claim, and the proof is complete.
\end{proof}

The estimates also control the exponential rate and Gamma shape appearing in
\cref{eq:gamma_tail}. Using $\hat c_N\ge a_1^2$ from \cref{prop:nodes}, we obtain
\begin{equation}\label{eq:tail_parameters}
\begin{aligned}
\delta_N:=\frac1{\hat c_N}-\frac1{c_1}&=\frac{c_1-\hat c_N}{c_1\hat c_N}=O_\nu(r^{-2N}),\\
\varepsilon_N:=b_N-1&=\frac{(\hat w_N-c_1)+(c_1-\hat c_N)}{\hat c_N} = O_\nu(Nr^{-2N}).
\end{aligned}
\end{equation}
We note that both quantities are positive. In particular, the largest Gamma factor converges
in law to $\Gamma(1,c_1)$, the first exponential factor in \cref{eq:ct}.
The next result makes its implication for relative tail accuracy quantitative.

\begin{corollary}\label{cor:relative_tail}
Fix $\nu>-1$, and let $r$ be as in \cref{prop:extreme_rates}.
There are constants $C_\nu>0$, $\theta_\nu\in(0,1)$, and an integer $N_\nu$
such that, for $N\ge N_\nu$ and $1\le H\le r^{2N}$,
\begin{equation}\label{eq:relative_tail_window}
\sup_{0\le t\le H}
\left|\frac{1-F_N(t)}{1-F(t)}-1\right|
\le C_\nu\bigl(\theta_\nu^N+Hr^{-2N}\bigr).
\end{equation}
Consequently, for every positive sequence $t_N=o(r^{2N})$,
the relative survival error tends to zero uniformly on $[0,t_N]$.
\end{corollary}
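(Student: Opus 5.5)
We prove \cref{eq:relative_tail_window} by the conditioning decompositions of \cref{rem:tails}, now tracked with explicit constants. Write $T=c_1e_1+R$ and $\widehat T_N=G_N+\widehat R_N$, with $G_N\sim\Gamma(b_N,\hat c_N)$ the factor at the largest node. Conditioning on the remainders gives
\[
1-F(t)=\E\bigl[\bar\Gamma_{1,c_1}(t-R)\bigr],\qquad 1-F_N(t)=\E\bigl[\bar\Gamma_{b_N,\hat c_N}(t-\widehat R_N)\bigr],
\]
where $\bar\Gamma_{b,c}$ is the Gamma survival function, equal to $1$ on $(-\infty,0]$. We compare the two expressions in two steps. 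First we replace $\Gamma(b_N,\hat c_N)$ by $\Gamma(1,c_1)$ while keeping $\widehat R_N$. Then we compare $\E[\bar\Gamma_{1,c_1}(t-\widehat R_N)]$ with $\E[\bar\Gamma_{1,c_1}(t-R)]$.

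\textbf{Step 1: the largest factor.} We aim for a pointwise bound
\[
\bigl|\bar\Gamma_{b_N,\hat c_N}(s)/\bar\Gamma_{1,c_1}(s)-1\bigr|\le C(1+s)\bigl(\delta_N+\varepsilon_N(1+|\log s|)\bigr),\qquad 0<s\le H.
\]
The exponential part contributes the factor $e^{-\delta_N s}$, which gives the $Hr^{-2N}$ term. The shape perturbation $\varepsilon_N$ contributes through $\partial_b\bar\Gamma_{b,c}$, which is controlled by $\log s$ and $\log\Gamma$ terms. The inputs are \cref{eq:tail_parameters}, namely $\delta_N=O(r^{-2N})$ and $\varepsilon_N=O(Nr^{-2N})$. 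The factor $N\log H\,r^{-2N}$ that appears is at most $C\theta^N$ for any $\theta\in(r^{-2},1)$, since $\log H\le 2N\log r$. The behaviour near $s=0$ needs separate care, but there both survival functions are close to $1$.

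\textbf{Step 2: the remainders.} With $\bar\Gamma_{1,c_1}(s)=\min(1,e^{-s/c_1})$ we have
\[
e^{t/c_1}\E\bigl[\bar\Gamma_{1,c_1}(t-R)\bigr]=\E\bigl[e^{R/c_1}\wedge e^{t/c_1}\bigr],
\]
and similarly for $\widehat R_N$. Both quantities are bounded below uniformly for $t\ge0$. It therefore suffices to show that
\[
\sup_{t\ge0}\Bigl|\E\bigl[e^{\widehat R_N/c_1}\wedge e^{t/c_1}\bigr]-\E\bigl[e^{R/c_1}\wedge e^{t/c_1}\bigr]\Bigr|\le C\theta_\nu^N.
\]
The function $x\mapsto e^{x/c_1}\wedge e^{t/c_1}$ is monotone. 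Integrating by parts, the difference is bounded by
\[
\int_0^\infty c_1^{-1}e^{x/c_1}\,\bigl|F_{\widehat R_N}(x)-F_R(x)\bigr|\,dx .
\]
We split this integral at a level $x_0\asymp N$.

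On $[0,x_0]$ we control $F_{\widehat R_N}-F_R$ as in the proof of \cref{thm:intro_convergence}(ii). The transform of $\widehat R_N$ is $\phi_N$ divided by the Gamma factor at $\hat c_N$, and the transform of $R$ is $\phi$ divided by $(1+zc_1)^{-1}$. These factors differ by $O(Nr^{-2N})$ on compact sets of the imaginary axis, and \cref{lem:transfer} then applies. One caveat: this requires each remainder to contain a factor of shape at least one, which holds for $R$ via $c_2$, but for $\widehat R_N$ it may fail. In that case we would use a smoothing version of the inversion inequality instead. The result is a geometric bound $q^N$ which, multiplied by $e^{x_0/c_1}$, must still decay. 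Choosing $x_0=\kappa N$ with $\kappa$ small achieves this.

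On $[x_0,\infty)$ we use Chernoff bounds at an exponent $\tau\in(1/c_1,1/c_2)$. For $R$ this is available by \cref{rem:tails}. For $\widehat R_N$ we need uniformity in $N$, that is, the second-largest node must stay below some $c'<1/\tau$. This follows from interlacing together with convergence of the second node to $c_2$, or more simply from the Chebyshev argument of \cref{prop:extreme_rates} applied again. Given $\sup_N\E[e^{\tau\widehat R_N}]<\infty$, the tail contribution is $O(e^{-(\tau-1/c_1)x_0})$, which is geometric in $N$.

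The main obstacle is Step 2: obtaining geometric smallness, uniformly in $t$, of the weighted distance between $\widehat R_N$ and $R$. This needs both the uniform exponential moments of $\widehat R_N$ and a transfer inequality for the remainders, where a shape-at-least-one factor is not guaranteed. Combining Steps 1 and 2 for $0\le t\le H$ yields \cref{eq:relative_tail_window}. The final claim follows by taking $H=t_N$, since $t_Nr^{-2N}\to0$.
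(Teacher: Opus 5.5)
Your factorization is a genuinely different route from the paper's. As written, though, it has a gap exactly where you flag it.

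\textbf{The gap in Step~2.} Step~2 needs $\sup_x|F_{\widehat R_N}(x)-F_R(x)|\le Cq^N$ with $e^{\kappa/c_1}q<1$. Your fallback cannot supply this. In a smoothing (Esseen) inequality the error term is of order $1/T$, and the largest usable cutoff is $T\asymp N^2$, since beyond it the quadrature bound $h_Nu^{2N+1}/(2N+1)$ is no longer small. So the fallback gives only $O(N^{-2})$, and after multiplication by $e^{\kappa N/c_1}$ this diverges.

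The step can be closed as follows.
\begin{itemize}
\item The high-frequency part of \cref{lem:transfer} needs only a distinguished shape $b>0$ bounded below, not $b\ge1$. For $Hc\ge1$ and $u\ge H$ the ratio is at most $(2H^2/u^2)^{b/2}$, so $\int_H^\infty|\widehat\phi(iu)|\,du/u\le 2^{b/2}b^{-1}|\widehat\phi(iH)|$.
\item For $\widehat R_N$, use the factor at $\hat c_{N-1}$. Min--max for the compression $J_N$ of $J$ gives $\hat c_{N-1}\le c_2$. This alone also gives your uniform exponential moment, so no convergence of the second node is needed.
\item The Chebyshev--Markov--Stieltjes inequality gives $\hat w_{N-1}+\hat w_N\ge\rho_\nu([\hat c_{N-1},\infty))\ge c_1+c_2$. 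Hence $b_{N-1}\ge\hat w_{N-1}/c_2\ge 1-O_\nu(Nr^{-2N})$ by \cref{prop:extreme_rates}.
\item By interlacing, $\hat c_{N-1}$ is nondecreasing in $N$, so $H\hat c_{N-1}\ge1$ for $H=\eta N^2$ and $N$ large.
\item The transform comparison must be made on the growing interval $[0,\eta N^2]$, not on compact sets. Split the difference as $(\phi_N-\phi)(iu)\,(1+iu\hat c_N)^{b_N}+\phi(iu)\bigl[(1+iu\hat c_N)^{b_N}-(1+iuc_1)\bigr]$. The first piece loses only a power of $N$. The second contributes $O_\nu(Nr^{-2N})$ to the $du/u$ integral, thanks to the $e^{-\sqrt u}$ decay of $\phi(iu)$.
\end{itemize}

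\textbf{Step~1 as displayed is too weak.} The factor $(1+s)$ on $\varepsilon_N(1+|\log s|)$ makes the bound of order $HN^2r^{-2N}$, which is of order $N^2$ when $H=r^{2N}$. The correct estimate has no such factor. With $x=s/\hat c_N$,
\[\bar\Gamma_{b_N,\hat c_N}(s)\,e^{s/c_1}=e^{-\delta_Ns}\,\Gamma(b_N)^{-1}\int_0^\infty e^{-v}(x+v)^{\varepsilon_N}\,dv.\]
So the pointwise ratio differs from $1$ by at most $C(\delta_Ns+\varepsilon_N(1+\log_+ s))$, with no singularity at $s=0$. The first ratio in your factorization is a weighted average of these pointwise ratios over $s\le t$, so it is $1+O(Hr^{-2N}+N^2r^{-2N})$.

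\textbf{Comparison with the paper.} With these repairs your argument works. The paper never deconvolves the largest factor. For $t\le\ell N$ it uses \cref{thm:intro_convergence}(ii) as a black box together with $\bar F(t)\ge e^{-t/c_1}$. For larger $t$ it combines uniform tail asymptotics with the prefactor estimate \cref{eq:tail_prefactor}, which comes from Gauss exactness applied to the Taylor polynomial of $-\log(1-y/c_1)/y$. Your route gets the prefactor convergence $\E[e^{\widehat R_N/c_1}]\to A_\nu$ automatically, as the $t\to\infty$ case of the remainder comparison. The price is a second Fourier-inversion estimate, for the deconvolved remainders, which the paper does not need.
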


\begin{proof}
Put $s_N=\hat c_N$, and use $\delta_N,\varepsilon_N$ from \cref{eq:tail_parameters}.
By the min--max principle for $J_N$, a compression of $J$ (\cref{rem:fredholm}),
we have that $\hat c_{N-1}\le c_2$ for $N\ge2$, so all remaining scales lie in $[0,c_2]$.

Set $\zeta=c_2/c_1<1$, and let
\[g(y)=-\frac{\log(1-y/c_1)}{y}\quad\text{and}\quad P_N(y)=\sum_{m=0}^{2N-1}\frac{y^m}{(m+1)c_1^{m+1}},\]
with $g(0)=1/c_1$. The power series gives $\sup_{[0,c_2]}|g-P_N|=O_\nu(\zeta^{2N}/N)$,
$P_N(c_1)=O_\nu(\log(N+1))$, and $\sup_{[0,c_1]}|P_N'|=O_\nu(N)$.
Exactness of the Gauss rule therefore yields
\[\log A_\nu-\sum_{i<N}\hat w_i g(\hat c_i) = \hat w_NP_N(s_N)-c_1P_N(c_1)+O_\nu(\zeta^{2N}/N).\]
The right side is $O_\nu(N\log(N+1)r^{-2N}+\zeta^{2N}/N)$ by \cref{eq:noderate}.
For large $N$, $s_N\ge s_0:=(c_1+c_2)/2>c_2$; the bounded parameter derivative
of $g$ then shows that replacing $c_1$ by $s_N$ changes the residual sum by $O_\nu(c_1-s_N)$.
Since $\log\Gamma(1+\varepsilon_N)+\varepsilon_N\log s_N=O_\nu(\varepsilon_N)$, we obtain
\begin{equation}\label{eq:tail_prefactor}
\log\frac{B_N}{A_\nu} = O_\nu\!\left(N\log(N+1)r^{-2N}+\frac{\zeta^{2N}}{N}\right).
\end{equation}

We next make the tail asymptotics uniform in $N$.
Choose $\tau\in(1/s_0,1/c_2)$. For $\widehat R_N=\sum_{i<N}G_i$, the bound on the remaining scales implies
\[\E[e^{\tau\widehat R_N}] = \prod_{i<N}(1-\tau\hat c_i)^{-b_i} \le \exp\!\left(\frac{\tau S_1}{1-\tau c_2}\right).\]
The same bound holds for $R=\sum_{k\ge2}c_ke_k$, and $0\le\varepsilon_N\le1$ for large $N$.
For $0\le\varepsilon\le1$ and $x>0$, the upper incomplete Gamma function satisfies
\[
\begin{gathered}
1\le\frac{\Gamma(1+\varepsilon,x)}{x^\varepsilon e^{-x}}
=\int_0^\infty e^{-v}(1+v/x)^\varepsilon\,dv
\le1+\frac{\varepsilon}{x},\\
\frac{\Gamma(1+\varepsilon,x)}{\Gamma(1+\varepsilon)}
\le e^{-x}\left(1+\frac{x^\varepsilon}{\Gamma(1+\varepsilon)}\right).
\end{gathered}
\]
The upper bounds follow from concavity and from
$(x+v)^\varepsilon\le x^\varepsilon+v^\varepsilon$, respectively.
Split the conditional survival probability at $\widehat R_N=t/2$.
On $\{\widehat R_N\le t/2\}$, the first bound and
$(1-\widehat R_N/t)^{\varepsilon_N}=1+O(\varepsilon_N\widehat R_N/t)$
leave a relative error $O_\nu(\varepsilon_N/t)$ after exponential tilting.
The second bound makes the normalized contribution from $t/2<\widehat R_N\le t$
at most $C_\nu\E[e^{\widehat R_N/s_N}\bigm|\widehat R_N>t/2]
=O_\nu(e^{-\omega_\nu t})$, where $\omega_\nu=(\tau -1/s_0)/2>0$.
The exponential moment also controls $\mathbb P(\widehat R_N>t)$ after normalization.
Together with \cref{rem:tails}, we obtain, uniformly for large $N$ and $t\ge1$,
\begin{equation}\label{eq:uniform_tail_asymptotics}
\begin{aligned}
\bar F_N(t) &= B_Nt^{\varepsilon_N}e^{-t/s_N}\left[1+O_\nu\!\left(\frac{\varepsilon_N}{t}+e^{-\omega_\nu t}\right)\right],\\
\bar F(t) &= A_\nu e^{-t/c_1}\left[1+O_\nu(e^{-\omega_\nu t})\right].
\end{aligned}
\end{equation}

Finally, fix $q=(1+q_*)/2$ and put $\ell=-(c_1/2)\log q>0$.
\Cref{thm:intro_convergence}(ii) gives
\[\sup_{0\le t\le\ell N}\left|\frac{\bar F_N(t)}{\bar F(t)}-1\right|\le C_\nu q^Ne^{\ell N/c_1}=C_\nu(\sqrt q)^N,\]
where we used the fact that $\bar F(t)\ge e^{-t/c_1}$, since $T\ge c_1e_1$.
For $\ell N\le t\le H\le r^{2N}$, it follows from \cref{eq:uniform_tail_asymptotics} that
\[\frac{\bar F_N(t)}{\bar F(t)} = \frac{B_N}{A_\nu}\exp\!\left(\varepsilon_N\log t-\delta_Nt\right)
\left[1+O_\nu\!\left(\frac{\varepsilon_N}{t}+e^{-\omega_\nu t}\right)\right].\]
Here $\delta_Nt=O_\nu(Hr^{-2N})$ and $\varepsilon_N\log t=O_\nu(N^2r^{-2N})$ by \cref{eq:tail_parameters}.
Choose $\theta_\nu<1$ strictly larger than $\max\{\sqrt q,e^{-\omega_\nu\ell},r^{-2},\zeta^2\}$.
Then \cref{eq:tail_prefactor} and all the other terms except $\delta_Nt$ are $O_\nu(\theta_\nu^N)$.
The exponent is uniformly bounded as long as $H\le r^{2N}$, so exponentiation proves \cref{eq:relative_tail_window}.
If $H<\ell N$, the first estimate already suffices.
The consequence follows by taking $H=\max\{1,t_N\}$ for all sufficiently large $N$.
\end{proof}

\end{document}